\newtheorem{theorem}{Theorem}
\newtheorem{definition}{Definition}
\newtheorem{proposition}{Proposition}
\newtheorem{example}{Example}
\newtheorem{corollary}{Corollary}
\newtheorem{remark}{Remark}
\def\REQ#1{{\rm (\ref{#1})}}
\def\d{{\rm d}}
\def\D{\displaystyle}
\def\cG{\mathcal{G}}
\def\cH{\mathcal{H}}
\def\cO{\mathcal{O}}
\def\cS{\mathcal{S}}
\def\e{\text{\rm e}}
\def\d{\text{\rm d}}
\def\i{\text{\rm i}}
\def\P{\mathbb{P}}
\def\bR{\mathbb{R}}
\numberwithin{equation}{section}
\begin{document}

\author{Zdzislaw Brze{\'z}niak}
\address{Department of Mathematics, University of York,  York, YO10 5DD, UK}
\email{zdzislaw.brzezniak@york.ac.uk}

\author{Ben Goldys}
\address{School of Mathematics and Statistics, The University of Sydney, Sydney 20006, Australia}
\email{Beniamin.Goldys@sydney.edu.au}

\author{Szymon Peszat}
\address{Institute of Mathematics, Polish Academy of Sciences, \'Sw. Tomasza 30/7, 31-027 Krak\'ow, Poland}
\email{napeszat@cyf-kr.edu.pl}

\author{Francesco Russo}
\address{Ecole Nationale Sup\'erieure des Techniques Avanc\'ees,
ENSTA-ParisTech, Unit\'e de Ma\-th\'e\-matiques appliqu\'ees,
828, boulevard des Maréchaux
F-91120 Palaiseau (France)}
\email{Francesco.Russo@ensta-paristech.fr}

\title{Second Order PDEs with Dirichlet White Noise Boundary Conditions}
\thanks{Szymon Peszat's research  was supported by Polish Ministry of Science and Higher Education  Grant N N201 419039 ``Stochastic Equations in Infinite Dimensional Spaces''.}
\thanks{ Ben Goldys research was supported by the ARC grant DP120101886}
\thanks{Francesco Russo was partially supported
by the ANR Project MASTERIE 2010 BLAN 0121 01.}

\begin{abstract}
In this paper we study the Poisson and heat equations on bounded and unbounded domains with smooth boundary with random Dirichlet boundary conditions. The main novelty of this work is a convenient framework for the analysis of such equations excited by the white in time and/or space noise on the boundary. Our approach allows us to show the existence and uniqueness of weak solutions in the space of distributions. Then we prove that the solutions can be identified as smooth functions inside the domain, and finally the rate of their blow up at the boundary is estimated. A large class of noises including Wiener and fractional Wiener space time white noise,  homogeneous noise and L\'evy noise is considered.
\end{abstract}
\maketitle

{\bf Key words and phrases.} Partial differential equations, white noise,
boundary conditions, fractional Brownian Motion.
\medskip

{\bf MSC Classification 2010: }
60H15; 35J25; 35K10; 35K51; 60G20.

\section{Introduction}
The aim of this work is to develop a framework for systematic analysis of elliptic and parabolic boundary value problems with random boundary conditions including space-time white noise.

We focus in this paper on the Dirichlet problem only because the Dirichlet boundary conditions  pose the greatest challenge. However, let us  emphasise that our approach can be used to study both the elliptic and the parabolic equations with various boundary conditions in any dimension.

Let $\mathcal O$ be a (possibly unbounded) domain in $\mathbb{R}^d$. If $d>1$ then we will assume that the boundary $\partial {\mathcal O}$ is of class $C^\infty$. In this paper we are concerned with the existence and regularity of solutions to the following Poisson and heat equations
\begin{equation}\label{1E1}
\left\{\begin{array}{rcl}
\Delta u &=&\lambda u\qquad \mathrm{on}\ \mathcal O,\\
 u &=& \gamma \qquad \ \  \mathrm{on}\  \partial\mathcal O,
\end{array}\right.
\end{equation}
\begin{equation}\label{1E2}
\left\{
\begin{array}{rcl}
\D \frac {\partial u}{\partial t} &=& \Delta u \qquad \mbox{on}\
(0,+\infty)\times {\cO},\\
\D u &=&  \D\frac {\d \xi}{\d t}\qquad \,  \mbox{on}\
(0,+\infty)\times \partial {\cO},\\
u(0,\cdot)&=& u_0, \qquad \mbox{on}\ {\cO},
\end{array}
\right.
\end{equation}
where in  \eqref{1E1},   $\lambda>0$, if the domain is unbounded and $\lambda\ge 0$ in the case of a bounded $\cO$. We note that in the parabolic case the boundary condition     takes non-standard form $\frac{\d\xi}{\d t}$ in order to conform to notation used in the It\^o calculus.

If $\gamma$ and $\xi$ are deterministic and $\xi$ is time-differentiable then elliptic and parabolic problems \eqref{1E1} and \eqref{1E2}  are special cases of the general theory of boundary value problems that have been   thoroughly studied, see for example a basic monograph by  Lions and Magenes \cite{Lions-Magenes}, where the theory of weak solutions is developed. A semigroup approach to the inhomogeneous boundary value problems was initiated by Balakrishnan \cite{Balakrishnan}, for further developments see e.g \cite{Delfour, Lasiecka}.

Extension of the deterministic theory to the case of random boundary conditions is a natural next step motivated by various problems of mathematical physics, see for example \cite{Bricmont}.  It turns out that, except certain degenerate problems, a solution to equation \eqref{1E2} cannot have Markov  property if the noise $\xi$ is regular in time. Therefore there is a demand for the theory of equations with boundary data that are distributions in time. In fact, we will demonstrate in Section \ref{S4} that if $\xi$ is a Wiener or, more generally, a L\'evy process,  then the solution has the Markov property in an appropriate state
space. Another generalisation of the classical boundary conditions  leads to  the space-time white noise on the boundary. A typical example of such a situation arises, when $\mathcal O=(0,+\infty)\times\mathbb R\subset\mathbb R^{2}$ and formally
$$
\xi(t,x)=\sum_{k=1}^\infty W_k(t)e_k(x)\quad x\in\partial\mathcal O\equiv \mathbb{R},
$$
where $\left\{e_k\colon  k\ge 1\right\}$ is an orthonormal basis in $L^2(\mathbb{R},\mathcal{B}(\mathbb{R}),\d x)$ and $\left\{W_k\colon  k\ge 1\right\}$ is a collection of independent, one-dimensional Brownian Motions. Let us recall that such a process $\xi$, known as a cylindrical or white in space Wiener process,  is in fact a generalised random process; that is a distribution-valued process. It will be shown in Section \ref{S6} that equation \eqref{1E2} with such a noise still has a function-valued solution in $\mathcal O$. Other classes of generalised  noises will be studied as well, see Section \ref{S2}.

The existence of Markovian solutions to equation \eqref{1E2} with either Dirichlet or Neumann boundary conditions has been studied in few papers only.  It was
 proved in \cite{DaPrato-Zabczyk,DZ2,Freidlin-Sowers, Maslowski, Sowers} that under some assumptions equation \eqref{1E2} with the  Neumann boundary conditions has an $L^2(\cO)$-valued solutions.  For the wave equations with noise entering through the Neumann boundary condition see  e.g. \cite{DL1, L}.


It turns out that the Dirichlet problem is more difficult. In fact, Da Prato and Zabczyk demonstrated in \cite{DaPrato-Zabczyk}, see also \cite{DZ2, Peszat-Zabczyk},  that even in dimension one the solution to \REQ{1E2}, with $\xi$ being a Wiener process, takes values in the Sobolev space $H^{-\alpha}$ only if $\alpha>\frac{1}{2}$, hence is not function-valued. For this reason equation \REQ{1E2} was usually studied with more regular boundary noise see \cite{CS1, CS2, DS}. However, Al\`os and Bonaccorsi observed in \cite{AB1, AB2} that solution to (\ref{1E2}) considered on $\mathcal O=(0,+\infty)$ can be well defined in the space $L^2(0,+\infty ;\rho(x)\d x)$ with $\rho(x)=\min\left(1,x^{1+\theta}\right)\e^{-x}$ and $\theta>0$. This idea was further developed in \cite{GF}, where it was shown that for $\theta\in(0,1)$ the solution defined in \cite{AB1} can be identified as a mild solution in the spirit of \cite{DZ1}.

Finally, it is shown in \cite{Peszat-Zabczyk}  that if
$$
\xi(t)(x)= \int_0^t \int_{\partial \cO} \int_S \phi(x,y,z)\widehat \pi(\d r,\d y,\d z),
$$
where $\phi\colon [0,+\infty)\times \partial \cO \times S\mapsto \mathbb{R}$ is sufficiently regular function and $\widehat \pi$ is a Poisson random measure, then $\REQ{1E2}$ defines Markov family on $L^p(\cO)$ for some   $p\in [1,2)$.

Another class of problems, the so-called random dynamic boundary value problems was studied in \cite{CS1}. We note that in this case, the boundary condition is regular in time and therefore is not related to the main difficulty we deal with in this work. In this paper we are concerned with elliptic or parabolic problems. For hyperbolic problems with noise driven from the boundary see e.g. \cite{BP2, L}.

We will describe now in more detail the content of the paper. In section \ref{S2} we present some preliminary definitions and results needed in the following sections.
The concepts of weak solutions to equations \eqref{1E1} and \eqref{1E2} are introduced in Section \ref{S3} . Then in Section \ref{S4} we show that solution $u$ to problem \REQ{1E2} is  given by
\begin{equation}\label{1E3}
u(t)= S(t) u_0 + \int_0^t S(t-r) (\lambda-\Delta_\tau )D \d \xi(r),
\end{equation}
where $S$  is  the semigroup generated in $\mathcal H^\star$ (see Section \ref{S4} for definition) by the Laplace operator $\Delta_\tau$ with homogeneous boundary conditions, and $D$ is  the Dirichlet map which is defined as a weak solution to \REQ{1E1}. Therefore, $u$  is the  mild solution to the evolution equation
\begin{equation}\label{1E4}
\d u = \Delta _\tau u\d t + (\lambda-\Delta_\tau)D \d \xi.
\end{equation}
Then we will show (see Theorems \ref{4T1} and \ref{4T2}), that  for any  $\cS^\prime(\partial \mathcal{O})$-valued random variable $\gamma$ and any $\cS^\prime(\partial \cO)$-valued stochastic process $\xi$, there exist unique  weak solutions to equations $\REQ{1E1}$ and $\REQ{1E2}$ respectively. Extending the idea of Al\`os and Bonaccorsi  we show, see Section \ref{S6},  that under some natural assumptions on $\gamma$ and $\xi$, the solutions to \REQ{1E1} and \REQ{1E2} are $C^\infty$ in space (and in time in the case of the parabolic problem) inside the domain. Finally, see Section \ref{S7} we calculate in particular cases the rate of blow up of the solutions near the boundary $\partial \mathcal{O}$.

\section{Generalised random elements and processes}\label{S2}
We shall denote by $(\cdot,\cdot)$ the duality form on $\mathcal{S}^\prime(\mathbb{R}^d)\times \mathcal{S}(\mathbb{R}^d)$.  Let $K$ be a closed subset of $\mathbb{R}^d$.  We denote by $\cS^\prime(K)$ the class of all tempered distributions on $\mathbb{R}^d$ such that
$$
(\gamma, \psi) =0, \qquad \forall\, \psi\in \mathcal{S}(\mathbb{R}^d)\colon \psi=0 \ \text{on}\ K.
$$
We shall denote by $\mathcal{S}(K)$  the space of the restrictions of the test functions from $\mathcal{S}(\mathbb{R}^d)$ to the set $K$.
Obviously, $\cS^\prime(K)$ is a closed subspace of $\mathcal{S}^\prime(\mathbb{R}^d)$. Moreover the map
\[ \Lambda: \cS^\prime(K) \times \cS(K) \ni (\gamma, \psi_{|K})\mapsto (\gamma,\psi)\in\mathbb{R}\]
is well defined and bilinear. In what follows we will usually denote $\Lambda(\gamma, \psi_{|K})$ by $(\gamma, \psi_{|K})$.
   On $\mathcal{S}^\prime(K)$ we consider the topology inherited from $\mathcal{S}^\prime(\mathbb{R}^d)$. Then the Borel $\sigma$-field  $\mathcal{B}(\mathcal{S}^\prime(K))$  is  generated by the family of functions $(\cdot,\psi)$, $\psi \in \mathcal{S}(K)$. Note however that we do not claim that the space $\mathcal{S}^\prime(K)$ is a dual of $\mathcal{S}(K)$.

Let $(\Omega,\mathfrak{F},\mathbb{P})$ be a probability space. In this paper we assume that in the elliptic problem $\gamma$ is a measurable mapping from $\Omega$ to $\mathcal{S}^\prime(\partial \mathcal{O})$, and in the parabolic problem $\xi$ is an $\mathcal{S}^\prime(\partial \mathcal{O})$-valued process with c\`adl\`ag trajectories.

Let $L_0(\Omega,\mathfrak{F},\mathbb{P})$ be the space of all real-valued random variables on $(\Omega,\mathfrak{F},\mathbb{P})$ equipped with the topology of convergence in probability. Assume that $E$ is a locally convex  topological vector space and let $E^\prime$ be the topological dual space, see for instance \cite{Yos_1995}. Let  $(\cdot,\cdot)$ denote the corresponding duality bilinear form on $E^\prime\times E$.  A linear continuous mapping $\gamma \colon E\mapsto L_0(\Omega,\mathfrak{F},\mathbb{P})$ is called \emph{generalised linear random element on $E$}. For the reader convenience we recall representation theorem for generalised linear random elements. For its proof see e.g. \cite{I}.
\begin{theorem}
Assume that $E$ is nuclear and  $\gamma$ is  a generalised linear random
 element on $E$. Then there exists a measurable
 $\tilde {\gamma} \colon \Omega\mapsto E^\prime$ such that for any $\psi \in E$,
 \begin{equation}\label{eqn-representation1}
 \gamma(\psi)=\left(\tilde{\gamma},\psi\right), \;\;\mathbb{P}-\mbox{a.s.}
\end{equation}
\end{theorem}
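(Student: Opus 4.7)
The plan is to combine the Bochner--Minlos theorem, applicable because of the nuclearity of $E$, with a disintegration argument on the original probability space $\Omega$. First, I would form the characteristic functional $\chi\colon E\to\C$ by $\chi(\psi):=\E\bigl[\e^{\i\gamma(\psi)}\bigr]$. Continuity of $\gamma\colon E\to L_0(\Omega,\mathfrak{F},\P)$, combined with the fact that convergence in probability of $\gamma(\psi_n)$ implies convergence of $\e^{\i\gamma(\psi_n)}$ in $L^1(\Omega)$, yields continuity of $\chi$, while positive definiteness and the normalisation $\chi(0)=1$ follow from linearity of $\gamma$. The Bochner--Minlos theorem then produces a unique Radon probability measure $\mu$ on $(E',\mathcal{B}(E'))$ whose Fourier transform agrees with $\chi$.

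The remaining task is to realise $\mu$ as the law of a measurable map defined on the given $\Omega$. I would pick a countable subset $\{\psi_n\}\subset E$ that is dense in each Hilbertian completion of $E$ (possible because the nuclear spaces of interest here are separable) and that separates the points of $E'$. The evaluation map
\[
J\colon E'\ni\eta\longmapsto\bigl((\eta,\psi_n)\bigr)_{n\ge1}\in\mathbb{R}^{\mathbb{N}}
\]
is then injective and Borel, and one can show, using the Hilbert--Schmidt factorisation of the canonical embeddings $E_q\hookrightarrow E_p$ inherent in the nuclear structure of $E$, that $J(E')$ is a Borel subset of $\mathbb{R}^{\mathbb{N}}$ admitting a Borel measurable inverse. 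I would then set
\[
\tilde\gamma(\omega):=J^{-1}\bigl((\gamma(\psi_n)(\omega))_n\bigr)
\]
on the event that the right-hand side lies in $J(E')$ (and put $\tilde\gamma(\omega):=0$ otherwise). This event has full probability because the push-forward of $\P$ under $(\gamma(\psi_n))_n$ and the push-forward of $\mu$ under $J$ coincide, both being characterised by the same finite-dimensional characteristic functions. By construction $(\tilde\gamma(\omega),\psi_n)=\gamma(\psi_n)(\omega)$ almost surely, and the identity \eqref{eqn-representation1} extends from the dense sequence $\{\psi_n\}$ to arbitrary $\psi\in E$ by continuity of both sides in $\psi$.

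The main obstacle is showing that $J(E')$ is Borel in $\mathbb{R}^{\mathbb{N}}$ and that $J^{-1}$ is measurable into $E'$; this descriptive set-theoretic step is exactly where nuclearity is indispensable, since it lets one represent $E'$ as a countable union of Polish spaces $E_q'$ (Hilbert dual spaces) whose inclusions are controlled by Hilbert--Schmidt operators, making $E'$ a Souslin space. Once this point is settled, the representation \eqref{eqn-representation1} follows from Bochner--Minlos combined with routine continuity arguments.
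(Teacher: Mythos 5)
The paper itself contains no proof of this theorem --- it is stated with a pointer to It\^o's monograph \cite{I} --- so there is no internal argument to compare against; what you have written is a genuine, essentially correct proof, but it follows a different route from the standard one in \cite{I}. There, one exploits nuclearity directly on $\Omega$: continuity in probability at $0$ yields a Hilbertian seminorm $p$ controlling $\gamma$, one passes to a larger seminorm $q$ with $E_q\hookrightarrow E_p$ Hilbert--Schmidt, and an averaging estimate over an orthonormal basis $(e_n)$ of $E_q$ shows that almost surely $\sum_n|\gamma(e_n)|^2\,p(e_n)^2$-type sums converge, so that $\psi\mapsto\gamma(\psi)(\omega)$ is a.s.\ $q$-continuous and $\tilde\gamma(\omega)\in E_q'$ is obtained constructively, with no detour through the law of $\gamma$. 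Your route instead builds the law on $E'$ via Bochner--Minlos and then pulls it back to $\Omega$ through the evaluation map $J$ and the Lusin--Souslin theorem; this is sound (the finite-dimensional characteristic functions do identify the two laws on $\mathbb{R}^{\mathbb{N}}$, and $E'$ is indeed a countable union of separable Hilbert duals $E_q'$, hence Lusin, so $J(E')$ is Borel with Borel inverse), but it costs you two hypotheses that It\^o's argument does not need: separability of $E$ (harmless for $\mathcal{S}(\partial\mathcal{O})$, but a genuine restriction relative to the statement as given), and the descriptive-set-theoretic machinery. Two small points to tighten: you should ask that $\{\psi_n\}$ be dense in $E$ for its own (projective-limit) topology, not merely in each Hilbertian completion, since the final continuity step approximates $\psi$ in the topology of $E$; and you should fix once and for all representatives of the classes $\gamma(\psi_n)\in L_0(\Omega,\mathfrak{F},\mathbb{P})$ before defining $\tilde\gamma(\omega)=J^{-1}\bigl((\gamma(\psi_n)(\omega))_n\bigr)$. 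With those remarks your argument stands, and it has the advantage of making the role of the characteristic functional explicit, which meshes well with the Gaussian and L\'evy examples of Section \ref{S2}.
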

A typical application of the above Theorem is for $E=\mathcal{S}(\mathbb{R}^d)$.
However, because of separability of $\mathcal{S}(\mathbb{R}^d)$,  the following generalisation of it is also true.\\
If $\gamma$ is  a generalised linear random element on $\mathcal{S}(\mathbb{R}^d)$
such that for all $\psi\in \mathcal{S}(\mathbb{R}^d)\colon \psi=0$ on  $K$,
$\psi\in \mathcal{S}(K)$
\begin{equation}\label{eqn-representation2}
 \gamma (\psi )=0 \;\; \mathbb{P}-\mbox{a.s.}
 \end{equation}
then there exist a
measurable  map $\tilde {\gamma} \colon \Omega\mapsto \mathcal{S}^\prime(K)$ such that for any $\psi\in \mathcal{S}(K)$,
\begin{equation}\label{eqn-representation3}
 \gamma(\psi)=\left(\tilde{\gamma},\psi\right), \;\;\mathbb{P}-\mbox{a.s.}
 \end{equation}

From now on we identify a generalised linear random element $\gamma$ on $E$ with the corresponding $E^\prime$-valued random variable $\tilde \gamma$. We will also identify
a generalised linear random element $\gamma$ on $\mathcal{S}(\mathbb{R}^d)$
  satisfying condition \eqref{eqn-representation2}  with the corresponding $\mathcal{S}^\prime(K)$-valued random variable $\tilde \gamma$.

Let us present now  the most typical examples of $\mathcal{S}^\prime(\partial \mathcal{O})$-valued random variables $\gamma$ ad processes $\xi$.   Below, $\nu$ is  a Borel tempered measure on $\partial \mathcal{O}$; that is  $\nu\in \mathcal{S}^\prime(\partial \mathcal{O})$, and  $(e_k)$ is   an orthonormal basis of $L^2(\partial \mathcal{O},\mathcal{B}(\partial \mathcal{O}),\nu)$.

\begin{example}\label{2Ex1}
{\rm  Let $(\gamma_k)$  be a sequence of independent identically distributed zero-mean Gaussian   real-valued random  variables defined on $(\Omega, \mathfrak{F},\mathbb{P})$. Then the formula
$$
\gamma(\psi)= \sum_{k} \gamma_k (e_k\nu, \psi), \;\; \psi \in \mathcal{S}(\partial \mathcal{O}),
$$
where the series converges in $L^2(\Omega,\mathfrak{F},\mathbb{P})$, defines
a generalised random element on $\mathcal{S}(\partial \mathcal{O})$ satisfying condition \eqref{eqn-representation2} and  thus inducing  an
$\mathcal{S}^\prime(\partial \mathcal{O})$-valued random variable $\tilde \gamma$. As we explained earlier, we will identify $\gamma$ with $\tilde \gamma$ and use the earlier notation to denote the latter object. In particular, $\mathbb{P}$-a.s.,
$$
(\gamma, \psi)= \sum_{k} \gamma_k (e_k\nu, \psi), \;\; \psi \in \mathcal{S}(\partial \mathcal{O}).
$$
  This $\gamma$  will be called a  \emph{cylindrical Gaussian random variable} in $L^2(\partial \mathcal{O},\mathcal{B}(\partial \mathcal{O}),\nu)$ or \emph{Gaussian white noise on $\partial \mathcal{O}$} with \emph{intensity measure $\nu$}. }
\end{example}

\begin{example}\label{2Ex2}
{\rm Let $(W_k^H)$  be a sequence of independent fractional Brownian Motions   with a Hurst index $H\in (0,1)$.  Then the formula
$$
(\xi , \psi)(t)= \sum_{k} W_k^H(t) (e_k\nu, \psi), \qquad t\ge 0,\;\; \psi \in \mathcal{S}(\partial \mathcal{O}),
$$
where the series converges in $L^2(\Omega,\mathfrak{F},\mathbb{P})$, defines  an $\mathcal{S}^\prime(\partial \mathcal{O})$-valued random process.  We call $\xi$ the  \emph{cylindrical fractional Wiener  process in $L^2(\partial \mathcal{O},\mathcal{B}(\partial \mathcal{O}),\nu)$  with Hurst index $H$}. If $H=1/2$, then $\xi$ is called simply  a \emph{cylindrical  Wiener  process in $L^2(\partial \mathcal{O},\mathcal{B}(\partial \mathcal{O}),\nu)$}. }
\end{example}

\begin{example}\label{2Ex3}
{\rm Let $(\gamma_k)$ be a sequence of independent $\mathcal{N}(0,1)$-random variables, $(W^H_k)$ be a sequence of independent fractional Brownian Motions with Hurst index $H$ and let $(\nu_k)$ be a sequence of signed measures on $(\partial \mathcal{O},\mathcal{B}(\partial \mathcal{O}))$. Assume that
$$
\sum_{k} \|\nu_k\|_{\rm Var}^2<\infty,
$$
 where $\|\nu_k\|_{\rm Var}$ stands for the total variation of $\nu_k$. Then
 $$
 \gamma = \sum_{k} \gamma_k\nu_k,
 $$
 and
 $$
 W^H(t) =\sum_k W_k^H(t)\nu_k
 $$
 are $\mathcal{S}^\prime(\partial \mathcal {O})$ random variable and process.}
\end{example}

Let $-\infty <a<b<+\infty$. Assume now that $\mathcal{O}= (a,+\infty)\times \mathbb{R}^l$ or $\mathcal{O}= (a,b)\times \mathbb{R}^l$
for some positive integer $l$. Then $\partial \mathcal{O}$ can be identified with  $\mathbb{R}^l$ and $\mathbb{R}^{2l}$, respectively.  In these cases important classes of random processes on the boundary are provided by the so-called \emph{homogeneous Wiener processes}, see \cite{BP1, PZ1, Peszat-Zabczyk, P, PT}.
\begin{example}\label{2Ex4}
{\rm  Let  $\mathcal{W}$ be  a Wiener  process  taking values in $\mathcal{S}^\prime(\mathbb{R}^m)$, i.e.  $\mathcal{W}$ is  Gaussian process with continuous trajectories in $\mathcal{S}^\prime(\mathbb{R}^m)$ such that  for each  $\psi\in\mathcal{S}(\mathbb{R}^m)$, $t\mapsto  (\mathcal{W}(t),\psi)$ is a one  dimensional Wiener process. Let  $Q\colon\mathcal{S}(\mathbb{R}^m)\times \mathcal{S}(\mathbb{R}^m)\rightarrow\mathbb{R}$  be a  bilinear continuous  symmetric positive definite form defined by
$$
\mathbb{E}\,  ( \mathcal{W}(t), \psi)(\mathcal{W}(s), \varphi ) =  t\land s\, Q (\psi , \varphi )
\qquad
\text{for $(t,\psi ),(s,\varphi )\in [0,+\infty)\times \mathcal{S}(\mathbb{R}^m)$, $t\ge 0$.}
$$
We say that   $\mathcal W$ is \emph{spatially homogeneous} if  for each fixed $t\ge 0$ the law of $\mathcal{W}(t)$ is invariant with  respect to all translations $\tau ^\prime_h\colon \mathcal{S}^\prime(\mathbb{R}^m)\rightarrow \mathcal{S}^\prime(\mathbb{R}^m)$,  $h\in \mathbb{R}^m$, where  $\tau _h\colon \mathcal{S}(\mathbb{R}^m)\rightarrow \mathcal{S}(\mathbb{R}^m)$,  $\tau _h \psi (\cdot) = \psi (\cdot +h)$ for $\psi \in \mathcal{S}(\mathbb{R}^m)$.  Thus we assume that
$$
\mathbb{P}  \left( \mathcal{W}(t)\in \mathcal{X}\right) = \mathbb{P} \left( \mathcal{W}(t)\in  (\tau ^\prime_h)^{-1}(\mathcal{X}) \right)  \qquad  \text{for $h\in \mathbb{R}^m$, $\mathcal{X}\in \mathcal{B}(\mathcal{S}^\prime(\mathbb{R}^m))$.}
$$
The  property above holds if and only if  $Q$ is translation invariant, that is
\[Q (\psi ,\varphi)=  Q(\tau _h\psi ,\tau _h\varphi ),\quad\mathrm{for\,\, all}\quad \psi ,\varphi \in \mathcal{S}(\mathbb{R}^m),\,\,  h\in \mathbb{R}^m,\]
 see \cite{PZ1}. Next, see \cite{PZ1},  $Q$ is translation invariant if and only it is of the form  $Q (\psi ,\varphi )=\langle \Gamma , \psi \ast\varphi_{\rm {(s)}}\rangle$, where $\Gamma \in \mathcal{S}^\prime(\mathbb{R}^m)$ is the  Fourier transform of a positive symmetric tempered measure $\mu $ on  $\mathbb{R}^m$, and $\varphi _{\rm {(s)}}(x)=\overline{{\varphi (-x)}}$ for  $x\in {\mathbb{R}}^d$ and a complex-valued function $\varphi$.  We call $Q$ the \emph{covariance form}, and  $\mu$ the \emph{spectral  measure} of $\mathcal{W}$. Let us list the main properties of $\mathcal{W}$.
\begin{itemize}
\item
For each $\psi \in \mathcal{S}(\mathbb{R}^m)$,  $\{( {\mathcal{W}}(t),\psi) \}_{t\in[0,\infty)}$  is a real valued Wiener process.
\item
There exists $\Gamma\in\mathcal{S}^\prime(\mathbb{R}^m)$ such that for all  $\psi,\varphi\in{\mathcal{S}}$ one has
$$
Q (\psi ,\varphi ):= \mathbb{E} \,
( \mathcal{W}(1),\psi ) ( \mathcal{W}(1),\varphi) =
( \Gamma , \psi \ast\varphi_{\rm {(s)}}).
$$
\item
$\Gamma $ is the Fourier transform of a positive and symmetric Borel measure $\mu$ on $\mathbb{R}^m$, satisfying  $\int _{\mathbb{R}^d}(1+|x|)^r\d \mu (x)<\infty$ for a certain  $r<0$.
\item It was shown in \cite{PZ1} that $\mathcal{W}$ can be represented as a sum
$$
\mathcal{W}(t)=\sum_{k} W_k(t)\mathcal{F}(e_k\nu),
$$
where $(W_k)$ are independent standard real-valued Wiener processes, $\mathcal F$ stands for the Fourier transform, and $(e_k)$ is an orthonormal basis of the space  $L_{\rm{(s)}}^{2}(\mathbb{R}^d,\mathcal{B}(\mathbb{R}^d),  \mu)$ being  the closed subspace of $L^2(\Bbb R^d,\mathcal{B}(\mathbb{R}^d),\d \mu;\Bbb C)$ over the real field, consisting of all functions $u$ such that $u_{\rm{(s)}} = u$.
\end{itemize}

\smallskip \noindent
{\bf Random field.} Suppose that the spectral  measure $\mu$ of ${\mathcal W}$ is finite, and consequently  $\Gamma = \mathcal F( \mu) $ is a uniformly continuous bounded function.  Then there exists a Gaussian random field on $[0,+\infty )\times \mathbb{R}^m$,  which we also denote by  $\mathcal{W}$, such that:
\begin{itemize}
\item  The mapping $(t,x)\mapsto \mathcal{W}(t,x)$ is continuous with  respect to the first and measurable with respect to the both variables  $\mathbb{P} $-almost surely.
\item For each $x$, $\{\mathcal W(t,x)\}_{t\in [0,+\infty)}$  is a one dimensional Wiener process.
\item
$\mathbb{E} \, \mathcal{W}(t,x)\mathcal{W}(s,y) = t\land  s\, \Gamma (x-y)$  for $t,s\in [0,\infty)$ and $x,y \in \mathbb{R}^m$.
\item
$( \mathcal{W} (t), \psi) =  \int _{\mathbb{R}^m} \psi (x) \mathcal{W}(t,x)\d x$  for $\psi \in \mathcal{S}(\mathbb{R}^m)$.
\end{itemize}

In particular, if $\Gamma (x) = \e^{-|x|^\alpha }$,  where $\alpha \in (0,2]$, then we obtain important examples of random fields known as symmetric  $\alpha$-stable distributions . For $\alpha =1$ and $\alpha =2$ the densities  of the spectral measures are given by the formulas  $c_1(1+ |x|^2)^{-\frac{m+1}{2}}$ and $c_2\e ^{-|x|^2}$, where $c_1$  and $c_2$ are appropriate constants.

\smallskip\noindent
{\bf White noise.} If $Q(\psi ,\varphi )=\left(\psi ,\varphi  \right)$,  then $\Gamma $ is equal to the Dirac $\delta _0$-function, its  spectral density $\frac{\d \mu}{\d x}$ is the constant function $(2\pi )^{-\frac{m}{2}}$ and
$\mathcal{W}$ is a cylindrical Wiener process on $L^2(\mathbb{R}^m,\mathcal{B}(\mathbb{R}^m),\d x)$, see Example \ref{2Ex2},  and $\dot {\mathcal{W}}= \frac{\partial\mathcal{W}} {\partial t}$ is a white noise on $L^2([0,+\infty)\times \mathbb{R}^m)$. If  $B(t,x)$, $t\ge 0$ and $x\in \mathbb{R}^m$, is a Brownian sheet on  $[0,+\infty)\times {\mathbb{R}}^m$, see \cite{W}, then $\mathcal W$ can be defined by the formula,
$$
{\mathcal{W}}(t) = \frac{\partial ^d B(t)}{\partial x_1\ldots \partial x_d},
\qquad t\ge 0.
$$
}
\end{example}

\begin{example}\label{2Ex5}
{\rm Assume that $\pi$ and $\Pi$  are  Poisson random measures on $\partial \mathcal{O}$ and $[0,+\infty)\times \partial \mathcal{O}$ with intensity measures  $\mu$ and $\d t\nu$, respectively.  Then
$$
\left(\gamma,\psi\right) = \int_{\partial \mathcal{O}} \psi(y) \pi(\d y), \qquad \psi \in \mathcal{S}(\partial \mathcal{O}),
$$
and
$$
\left(\xi,\psi\right)(t) = \int_0^t \int_{\partial \mathcal{O}} \psi(y) \Pi(\d t, \d y), \qquad t\ge 0, \ \psi \in \mathcal{S}(\partial \mathcal{O}),
$$
define $\mathcal{S}^\prime(\partial \mathcal{O})$-valued random element and process, respectively. More generally, assume that $\rho\colon \partial \mathcal{O}\mapsto \mathbb{R}$ is a measurable function of a  polynomial growth. Then
$$
\left(\gamma_\rho,\psi\right) = \int_{\partial \mathcal{O}} \psi(y) \rho(y)\pi(\d y), \qquad \psi \in \mathcal{S}(\partial \mathcal{O}),
$$
and
$$
\left(\xi_\rho ,\psi\right)(t) = \int_0^t \int_{\partial \mathcal{O}} \psi(y)\rho(y) \Pi(\d t, \d y), \qquad t\ge 0, \ \psi \in \mathcal{S}(\partial \mathcal{O}),
$$
defines $\mathcal{S}^\prime(\partial \mathcal{O})$-valued random variable and process, respectively.
 }
\end{example}

\section{Weak solutions}\label{S3}

Let $\mathbf{n}$ denote the unit normal to the boundary and pointing inwards. Taking into account the Green formula (see e.g. \cite{Lions-Magenes}) we arrive at the following definitions of the weak solution to \REQ{1E1}.

\begin{definition}\label{3D1}
{\rm Let $\gamma$ be an ${\mathcal{S}}^\prime(\partial {\mathcal{O}})$-valued random variable. We call an ${\mathcal{S}}^\prime(\overline{\mathcal{O}})$-valued random variable $u$ a \emph{weak solution} to $\REQ{1E1}$ if
$$
(u, \Delta \psi) +\left(\gamma,  \frac {\partial \psi}{\partial \mathbf{n}}\bigg \vert_{\partial \mathcal{O}} \right)=\lambda( u,\psi),\quad \mathbb{P}-a.s.\quad \forall\, \psi\in \cS(\overline{\cO})\colon\  \psi= 0 \ \mbox{on} \ \partial {\cO}.
$$
}
\end{definition}

Note that the heat semigroup generated by the Laplace operator with Dirichlet boundary condition is not strongly continuous on $\mathcal{S}(\overline{\mathcal{O}})$. This is the main reason why in order to consider the parabolic problem we  have to introduce a certain scale of Hilbert spaces. Namely, let  $\vartheta(x)={(1+|x|^2)^{-1}}$ for $x\in \mathbb{R}^d$. Then, see e.g. \cite{PZ1, BP1, DZ2}, the heat semigroup $T(t)$, $t\ge 0$, generated by the Laplace operator $\Delta_\tau$ with homogeneous Dirichlet boundary conditions is  analytic (but not symmetric) on each   space $L^2_{\vartheta^r}:= L^2(\mathcal{O}, \mathcal{B}(\mathcal{O}),(\vartheta(x))^r\d x)$, $r\ge 0$.  Let $\lambda >0$ and let $H_r$, $r\ge 0$,  be the domain of $(\lambda -\Delta_\tau)^{r/2}$ considered on $L^2_{\vartheta^r}$. The space $H_r$ is equipped with the scalar product induced from $L^2_{\vartheta^r}$ by  $(\lambda -\Delta_\tau)^{r/2}$;
$$
\langle u,v\rangle_{H_r}= \langle ((\lambda -\Delta_\tau)^{r/2}u,(\lambda -\Delta_\tau)^{r/2}v\rangle_{L^2_{\vartheta^r}}.
$$
Let $H_{-r}$ be the dual space, where the duality map is given by
$$
H_r\hookrightarrow L^2_{\vartheta^r}\equiv \left(L^2_{\vartheta^r}\right)^*\equiv L^2(\mathcal{O}, \mathcal{B}(\mathcal{O}),(\vartheta(x))^{-r}dx)\hookrightarrow H_{-r}.
$$
Finally, let
$$
\mathcal{H}:= \bigcap_{r\ge 0} H_r,
$$
with the Fr\'echet topology, and let $\mathcal{H}^\prime$ be the topological dual. It is known that
$$
\mathcal{H}^\prime:=  \bigcup_{r\le 0} H_{-r}.
$$
Note that since $\mathcal{H}$ is a nuclear space, $\mathcal{H}^\prime$ is a co-nuclear space. Next,  the restriction of the heat semigroup $T$ to $H_r$, $r\ge 0 $, as well as to $\mathcal{H}$, is a $C_0$-semigroup. Moreover, $T$ can be extended to a $C_0$-semigroup on any $H_{-r}$-space as well as to $\mathcal{H}^\prime$.  Note that the generator of $T$ considered either on $\mathcal{H}$ or on $\mathcal{H}^\prime$ is a continuous linear operator. With a bit of abuse of notation, we will denote by $\Delta_\tau$ the generator of $T$ regardless the space on which $T$ is considered.

\begin{remark}\label{3R1}
{\rm Note that $\mathcal{H}$ is a subspace of $\mathcal{S}(\overline{\mathcal{O}})$. Therefore any distribution from  $\mathcal{S}^\prime(\overline{\mathcal{O}})$ can be treated as an element of $\mathcal{H}^\prime$.  Note however that $\mathcal{H}$ is not a dense subspace of  $\mathcal{S}(\overline{\mathcal{O}})$. In fact it is a closed proper subspace. Therefore it can happen that a non-zero $\xi\in \mathcal{S}^\prime(\overline{\mathcal{O}})$ vanishes on $\mathcal{H}$.}
\end{remark}

\begin{definition}\label{3D2}
{\rm Let $\xi$ be an $\mathcal{S}^\prime(\partial \mathcal{O})$-valued random process. We say that an  $\mathcal{H}^\prime$-valued random process   $u$ is a \emph{weak solution} to $\REQ{1E2}$ starting from  $u_0\in \mathcal{H}^\prime$, if
$$
(u(t),\psi) =(u_0,\psi) +\int_0^t (u(r), \Delta \psi)\d r + \left(\xi(t),\frac {\partial \psi}{\partial \mathbf{n}}\bigg \vert_{\partial \mathcal{O}}\right)- \left(\xi(0),\frac {\partial \psi}{\partial \mathbf{n}}\bigg \vert_{\partial \mathcal{O}}\right),
$$
$\P$-a.s. for all for all $t>0$, and  $\psi\in \mathcal{H}$.}
\end{definition}

For the completeness of the presentation we present the following result on the uniqueness of solutions.

\begin{proposition}\label{3P1}
For any constant $\lambda\ge  0$, and  any    $\cS^\prime(\partial \cO)$-valued random variable $\gamma$, the elliptic problem $\REQ{1E1}$ has  at most one solution.
\end{proposition}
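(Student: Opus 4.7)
The plan is to exploit linearity. Given two weak solutions $u_1,u_2$ of \eqref{1E1} for the same boundary datum $\gamma$, set $u:=u_1-u_2$ and argue pathwise: for $\mathbb{P}$-almost every $\omega$, $u(\omega)\in\mathcal{S}^\prime(\overline{\mathcal{O}})$ satisfies the homogeneous identity
\[(u,\Delta\psi)=\lambda(u,\psi),\qquad \forall\,\psi\in\mathcal{S}(\overline{\mathcal{O}}) \text{ with } \psi|_{\partial\mathcal{O}}=0.\]
Fixing such an $\omega$, the goal reduces to showing that this deterministic distribution $u$ vanishes. By the characterisation of $\mathcal{S}^\prime(\overline{\mathcal{O}})$ recalled at the beginning of Section~\ref{S2}, it is enough to prove that $(u,\varphi)=0$ for every $\varphi\in\mathcal{S}(\overline{\mathcal{O}})$.

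The key observation is that if an arbitrary $\varphi\in\mathcal{S}(\overline{\mathcal{O}})$ can be written as $\varphi=(\lambda-\Delta)\psi$ for some admissible test function $\psi\in\mathcal{S}(\overline{\mathcal{O}})$ with $\psi|_{\partial\mathcal{O}}=0$, then substituting this representation into the homogeneous identity yields
\[(u,\varphi)=(u,(\lambda-\Delta)\psi)=\lambda(u,\psi)-(u,\Delta\psi)=0,\]
and the proof is complete. The whole argument therefore hinges on the following surjectivity claim: for every $\varphi\in\mathcal{S}(\overline{\mathcal{O}})$, the Dirichlet problem $(\lambda-\Delta)\psi=\varphi$ in $\mathcal{O}$ with $\psi=0$ on $\partial\mathcal{O}$ admits at least one solution $\psi\in\mathcal{S}(\overline{\mathcal{O}})$; uniqueness of $\psi$ is irrelevant here.

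Establishing this solvability is the principal technical step, and it is the only point at which the bounded and unbounded geometries need to be handled separately. When $\mathcal{O}$ is bounded with smooth boundary, classical elliptic theory furnishes a unique $\psi\in C^\infty(\overline{\mathcal{O}})=\mathcal{S}(\overline{\mathcal{O}})$ for any $\lambda\ge 0$, using that $-\Delta$ with homogeneous Dirichlet data has strictly positive spectrum on such a domain. When $\mathcal{O}$ is unbounded the standing hypothesis $\lambda>0$ of \eqref{1E1} is essential: the Green kernel of $\lambda-\Delta_\tau$ decays exponentially at rate $\sqrt{\lambda}$, and together with the rapid decay of $\varphi$ and interior Schauder estimates one obtains inductively that $\psi$ together with all its derivatives is rapidly decreasing up to the boundary, so that $\psi\in\mathcal{S}(\overline{\mathcal{O}})$. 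The main obstacle is exactly the combination of boundary regularity and rapid decay at infinity demanded by $\mathcal{S}(\overline{\mathcal{O}})$ in the unbounded case; without $\lambda>0$ a harmonic extension of non-trivial data at infinity could spoil decay, which is precisely why that assumption cannot be dropped. Once the surjectivity claim is secured, the displayed identity above gives $(u,\varphi)=0$ for every $\varphi\in\mathcal{S}(\overline{\mathcal{O}})$, hence $u=0$ in $\mathcal{S}^\prime(\overline{\mathcal{O}})$, $\mathbb{P}$-almost surely.
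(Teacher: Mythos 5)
Your argument is correct, but it follows a genuinely different route from the paper's. You reduce to the homogeneous problem and then kill the difference $u$ by proving that $(\lambda-\Delta)$ maps $\{\psi\in\mathcal{S}(\overline{\mathcal{O}}):\psi|_{\partial\mathcal{O}}=0\}$ \emph{onto} $\mathcal{S}(\overline{\mathcal{O}})$; this is exactly the mapping property of the Dirichlet resolvent $(\Delta_\tau-\lambda)^{-1}$ that the paper itself records at the start of Section~\ref{S4} via Agmon--Douglis--Nirenberg theory, so your key lemma is already part of the paper's toolkit and your proof is essentially self-contained. The paper instead regards $u$ as a tempered distribution on all of $\mathbb{R}^d$ (supported in $\overline{\mathcal{O}}$) and verifies that it satisfies $\Delta u=\lambda u$ on the whole space: for each $\psi\in\mathcal{S}(\mathbb{R}^d)$ it constructs an auxiliary $\tilde\psi\in\mathcal{S}(\mathbb{R}^d)$ with $\Delta\tilde\psi=\lambda\tilde\psi$ in $\mathcal{O}$ and $\tilde\psi=\psi$ on $\partial\mathcal{O}$, splits $\psi=(\psi-\tilde\psi)+\tilde\psi$, and then invokes (as a black box) uniqueness for the whole-space equation in $\mathcal{S}^\prime(\mathbb{R}^d)$. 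What each approach buys: the paper's delegates all the analysis to a single classical fact about $\mathbb{R}^d$, at the cost of needing solvability of a boundary-value problem with \emph{inhomogeneous} data $\tilde\psi=\psi$ on $\partial\mathcal{O}$ together with a Schwartz extension of $\tilde\psi$; yours needs only the homogeneous-data resolvent, and it also avoids a subtlety in the whole-space step when $\lambda=0$ (harmonic tempered distributions are polynomials, so the whole-space uniqueness the paper ``takes for granted'' is only true there after using that $u$ is supported in the bounded set $\overline{\mathcal{O}}$). Your split into the bounded ($\lambda\ge 0$, positive first Dirichlet eigenvalue) and unbounded ($\lambda>0$, exponential decay of the Green kernel) cases is exactly where the real work lies, and is handled correctly.
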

\begin{proof} Clearly it is enough to show that solution to the problem with homogeneous boundary conditions vanishes, and we take this for granted in the case of $\cO=\bR^d$. In other words we will use the fact that if $u$ is an  $\mathcal{S}^\prime(\mathbb{R}^d)$-valued random variable  such that
$$
\left(u,\Delta \psi\right)= \lambda \left(u,\psi\right),\qquad \forall\, \psi\in \mathcal{S}(\mathbb{R}^d),
$$
then $u\equiv 0$.

Assume that $u$ solves \REQ{1E1} with the boundary condition $\gamma$. It  is enough to show that the $\cS^\prime(\bR^d)$-valued random variable $\tilde u$ defined by
$$
(\tilde u,\psi)= (u,\psi\mid_{\overline{\cO}}),\qquad \psi \in \cS(\bR^d),
$$
solves the problem on the whole space. To do this take $\psi \in \cS(\bR^d)$. Let $\tilde \psi \in \cS(\bR^d)$ be such that
$$
\Delta \tilde \psi =\lambda \tilde \psi \quad \text{in} \quad
\cO,\qquad \tilde \psi = \psi \qquad \text{on}\quad \partial \cO.
$$
By smoothness property of $\tilde \psi$, $\Delta\tilde \psi =\lambda \tilde \psi$ on $\overline{\mathcal{O}}$. Then
$$
\begin{aligned} (\tilde u ,\Delta \psi) &= (u,\Delta (\psi-\tilde \psi)\mid_{\overline{\cO}}) + (u, \Delta \tilde \psi\mid_{\overline{\cO}}) \\
&= (u, \Delta (\psi-\tilde \psi)\mid_{\overline{\cO}})+\lambda (u, \tilde \psi \mid _{\overline{\cO}}) \\
&= \lambda (u,(\psi-\tilde \psi)\mid_{\overline{\cO}})+\lambda (u, \tilde \psi \mid _{\overline{\cO}}) \\
&=\lambda(u,\psi\mid_{\overline {\cO}})= \lambda (\tilde u,\psi).
\end{aligned}
$$
\end{proof}

For the parabolic problem the uniqueness follows from the equivalence of mild and weak formulation of the solution to the homogeneous problem
\begin{equation}\label{3E5}
\d u = \Delta _\tau u \d t,\qquad u(0)=u_0.
\end{equation}

\begin{proposition}\label{3P2}
For any $u_0\in \mathcal{H}^\prime$ and any   $\cS^\prime(\partial \cO)$-valued random process   $\xi$, the parabolic problem $\REQ{1E2}$ has  at most one solution. Moreover, the solutions to $\REQ{3E5}$ and $\REQ{1E2}$ with $\xi\equiv 0$ coincide and are equal to $T(t)u_0$, $t\ge 0$.
\end{proposition}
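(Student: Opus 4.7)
The plan is to reduce both assertions to a single claim: every weak solution of the homogeneous problem \REQ{3E5} must coincide with $t\mapsto T(t)u_0$. Uniqueness for \REQ{1E2} then follows by linearity, because the difference of two weak solutions sharing the same data $u_0$ and $\xi$ is an $\mathcal{H}^\prime$-valued weak solution of \REQ{3E5} starting from $0$.

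First, I would verify that $T(\cdot)u_0$ is indeed a weak solution of \REQ{3E5}. Since $\Delta_\tau$ is the continuous generator of the $C_0$-semigroup $T$ on the co-nuclear space $\mathcal{H}^\prime$, the identity $T(t)u_0-u_0=\int_0^t\Delta_\tau T(r)u_0\,\d r$ holds strongly in $\mathcal{H}^\prime$. Pairing against $\psi\in\mathcal{H}$ and using $\Delta_\tau\psi=\Delta\psi$ (valid because elements of $\mathcal{H}$ are smooth on $\overline{\mathcal O}$ and satisfy $\Delta^k\psi=0$ on $\partial\mathcal{O}$ for every $k\ge 0$) produces exactly Definition \ref{3D2} with $\xi\equiv 0$.

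For uniqueness, let $u$ be an arbitrary $\mathcal{H}^\prime$-valued weak solution of \REQ{3E5}. Fix $\phi\in\mathcal{H}$, $t>0$, and set $\phi_s:=T(t-s)\phi$. Invariance of $\mathcal{H}$ under $T$ gives $\phi_s\in\mathcal{H}$, and $\frac{\d\phi_s}{\d s}=-\Delta_\tau\phi_s$ holds in $\mathcal{H}$. The heuristic is that $s\mapsto (u(s),\phi_s)$ is constant on $[0,t]$, whence $(u(t),\phi)=(u_0,T(t)\phi)=(T(t)u_0,\phi)$ for every $\phi\in\mathcal{H}$, i.e. $u(t)=T(t)u_0$ in $\mathcal{H}^\prime$. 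To make this rigorous I would insert a partition $0=s_0<\cdots<s_n=t$ and telescope
\[
(u(t),\phi)-(u_0,T(t)\phi)=\sum_{k=1}^{n}\bigl[(u(s_k),\phi_{s_k})-(u(s_{k-1}),\phi_{s_k})\bigr]+\sum_{k=1}^{n}\bigl[(u(s_{k-1}),\phi_{s_k})-(u(s_{k-1}),\phi_{s_{k-1}})\bigr].
\]
The first bracket equals $\int_{s_{k-1}}^{s_k}(u(r),\Delta\phi_{s_k})\,\d r$ by Definition \ref{3D2} applied with the admissible test function $\phi_{s_k}\in\mathcal{H}$, while $\phi_{s_k}-\phi_{s_{k-1}}=-\int_{s_{k-1}}^{s_k}\Delta_\tau\phi_r\,\d r$ in $\mathcal{H}$. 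As the mesh of the partition shrinks, both sums converge to $+\int_0^t(u(r),\Delta_\tau\phi_r)\,\d r$ and $-\int_0^t(u(r),\Delta_\tau\phi_r)\,\d r$ respectively, and cancel.

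The main obstacle is justifying this passage to the limit without any a priori regularity of $u$ beyond what Definition \ref{3D2} already encodes. What one needs is strong continuity of $s\mapsto\phi_s$ in $\mathcal{H}$, which follows from the $C_0$-property of $T$ on $\mathcal{H}$, together with enough measurability and local integrability of $u$ in $\mathcal{H}^\prime$ for the Riemann sums to converge to genuine integrals. The latter is implicit in Definition \ref{3D2}, which already presupposes that $r\mapsto (u(r),\Delta\psi)$ is locally integrable for every $\psi\in\mathcal{H}$; applying this to the parametric family $\psi=\phi_{s_k}$, and exploiting uniform continuity of $s\mapsto\Delta_\tau\phi_s$ in $\mathcal{H}$ on $[0,t]$, is precisely what is needed to close the argument and conclude $u\equiv T(\cdot)u_0$.
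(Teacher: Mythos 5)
Your argument is correct and follows exactly the route the paper indicates: the paper gives no written proof of Proposition \ref{3P2}, only the remark that uniqueness ``follows from the equivalence of mild and weak formulation of the solution to the homogeneous problem \REQ{3E5}'', and your reduction by linearity plus the standard telescoping argument with the test functions $\phi_s = T(t-s)\phi$ is precisely that equivalence, written out in detail.
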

\begin{remark}\label{1R2}
{\rm Let  $u_\xi$ and $u_\eta$ be two  solutions corresponding to $\REQ{1E2}$ with the same initial value $u_0$ but with boundary functions $\xi$ and $\eta$. Then $u_\xi\equiv u_\eta$ implies that
$$
\left(\xi(t), \frac {\partial \psi}{\partial \mathbf{n}}\bigg \vert_{\partial \mathcal{O}}\right)= \left(\eta(t), \frac {\partial \psi}{\partial \mathbf{n}}\bigg \vert_{\partial \mathcal{O}}\right), \qquad \mathbb{P}-a.s. \ \forall\, t\ge 0, \ \forall \, \psi\in \mathcal{H}.
$$
One can  shown  that there exists  $\psi\in \mathcal{H}$ such that $\frac {\partial \psi}{\partial \mathbf{n}}\not = 0$ on $\partial\mathcal{O}$. }
\end{remark}

\section{Mild formulation and existence}\label{S4}

To solve the elliptic problem we will need  the resolvent operator $\left(\Delta_\tau-\lambda\right)^{-1}$, of the heat semigroup $T$ considered on $L^2(\mathcal{O})$ with $\lambda$ from the resolvent set of $\Delta_\tau$. Then for any $\psi\in \mathcal{S}(\overline{\mathcal{O}})\subset L^2(\mathcal{O})$, the function
$$
u=\left(\Delta_\tau-\lambda\right)^{-1}\psi
$$
solves
$$
(\Delta -\lambda)u(x)=\psi(x), \quad x\in \cO,\qquad u(x)= 0, \quad x\in \partial{\cO}.
$$
By the classical Agmon--Douglis--Nirenberg theory we have $u\in \cS(\overline{\mathcal{O}})$.  Moreover,  $(\Delta_\tau-\lambda)$ is a bounded linear operator from $\cS(\overline{\cO})$ into  itself. Define a continuous  linear operator $D\colon \cS^\prime(\partial \cO)\mapsto \cS^\prime(\overline{\cO})$ by the formula
$$
\left(D\gamma, \psi\right) = \left(\gamma, -\frac{\partial}{\partial\mathbf{n}}\left(\Delta_\tau-\lambda\right)^{-1}\psi\bigg \vert_{\partial \mathcal{O}} \right),\qquad \gamma\in \cS^\prime(\partial \cO),\ \psi\in \cS(\overline{\mathcal{O}}).
$$
We call $D$ the \emph{Dirichlet map}. Using the formula above we extend the Dirichlet map to random variables and obtain the following result.
\begin{theorem}\label{4T1}
For any  $\mathcal{S}^\prime(\partial {\mathcal{O}})$-valued random variable $\gamma$, $D \gamma$ is the unique solution to the Poisson equation $\REQ{1E1}$ in the sense of Definition \ref{3D1}.
\end{theorem}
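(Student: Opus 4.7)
The plan is to verify directly that $u := D\gamma$ satisfies the weak formulation of Definition \ref{3D1}, and then invoke Proposition \ref{3P1} for uniqueness.

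For existence, measurability of $\omega \mapsto D\gamma(\omega)$ as an $\mathcal{S}^\prime(\overline{\mathcal{O}})$-valued random variable follows from the continuity of $D\colon \mathcal{S}^\prime(\partial\mathcal{O})\to\mathcal{S}^\prime(\overline{\mathcal{O}})$ established just before the theorem. It remains to check that, $\mathbb{P}$-almost surely,
\[
(D\gamma, \Delta\psi) + \left(\gamma,\tfrac{\partial\psi}{\partial\mathbf{n}}\big|_{\partial\mathcal{O}}\right) = \lambda(D\gamma,\psi)
\]
for every test function $\psi\in\mathcal{S}(\overline{\mathcal{O}})$ with $\psi|_{\partial\mathcal{O}}=0$. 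Here I would fix such a $\psi$. The key observation is that $\varphi:=(\Delta-\lambda)\psi\in\mathcal{S}(\overline{\mathcal{O}})$, and since $\psi$ itself vanishes on $\partial\mathcal{O}$, by the classical Agmon--Douglis--Nirenberg regularity recalled in the paragraph preceding the definition of $D$, the function $\psi$ is the unique element of $\mathcal{S}(\overline{\mathcal{O}})$ solving $(\Delta-\lambda)\psi = \varphi$ with zero Dirichlet data; equivalently
\[
(\Delta_\tau-\lambda)^{-1}(\Delta\psi - \lambda\psi) = \psi.
\]

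Plugging this into the defining formula of $D\gamma$ applied to $\Delta\psi-\lambda\psi\in\mathcal{S}(\overline{\mathcal{O}})$ gives
\[
(D\gamma,\Delta\psi) - \lambda(D\gamma,\psi)
= \left(\gamma,\,-\tfrac{\partial}{\partial\mathbf{n}}(\Delta_\tau-\lambda)^{-1}(\Delta\psi-\lambda\psi)\big|_{\partial\mathcal{O}}\right)
= -\left(\gamma,\tfrac{\partial\psi}{\partial\mathbf{n}}\big|_{\partial\mathcal{O}}\right),
\]
which is exactly the identity in Definition \ref{3D1}. Uniqueness is then immediate from Proposition \ref{3P1}.

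The main (modest) obstacle is justifying the identity $(\Delta_\tau-\lambda)^{-1}(\Delta-\lambda)\psi=\psi$ in a way that is compatible with the pairing used to define $D$: one must verify that $(\Delta-\lambda)\psi$ really lies in the range on which $(\Delta_\tau-\lambda)^{-1}$ is the inverse of the classical Laplacian with Dirichlet data, and that the resulting object has a well-defined normal trace against $\gamma\in\mathcal{S}^\prime(\partial\mathcal{O})$. Both points are handled by the Agmon--Douglis--Nirenberg framework already invoked in the excerpt: $(\Delta_\tau-\lambda)^{-1}$ maps $\mathcal{S}(\overline{\mathcal{O}})$ into itself, so $-\tfrac{\partial}{\partial\mathbf{n}}(\Delta_\tau-\lambda)^{-1}\varphi\in\mathcal{S}(\partial\mathcal{O})$, making the pairing with $\gamma$ well defined. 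Everything else is a formal identity on test functions, and no additional regularity of $\gamma$ is used, which is why the theorem holds for arbitrary $\mathcal{S}^\prime(\partial\mathcal{O})$-valued data.
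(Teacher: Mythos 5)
Your argument is correct and is essentially the paper's own proof: both reduce to pairing $\gamma$ with $-\tfrac{\partial}{\partial\mathbf{n}}(\Delta_\tau-\lambda)^{-1}$ applied to $(\Delta-\lambda)\psi$ and using that, for $\psi$ vanishing on $\partial\mathcal{O}$, the resolvent inverts $(\Delta-\lambda)$ exactly, so that $(\Delta_\tau-\lambda)^{-1}(\Delta-\lambda)\psi=\psi$; the paper merely organizes this as the resolvent identity $(\Delta_\tau-\lambda)^{-1}\Delta_\tau\psi=\psi+\lambda(\Delta_\tau-\lambda)^{-1}\psi$ and recognizes the second term as $\lambda(D\gamma,\psi)$. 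Your added remarks on measurability of $D\gamma$ and the appeal to Proposition \ref{3P1} for uniqueness are consistent with what the paper leaves implicit.
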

\begin{proof} Let $\psi\in \mathcal{S}(\overline{\cO})$ be such that $\psi=0$ on $\partial \cO$. Then
\begin{align*}
\left(D\gamma,\Delta\psi\right) &=\left(\gamma, -\frac{\partial }{\partial \mathbf{n}}\left(\Delta _\tau-\lambda \right)^{-1}\Delta_\tau \psi\bigg \vert_{\partial \mathcal{O}}\right)\\
&= \left(\gamma, -\frac{\partial }{\partial {\mathbf{n}}} \psi\bigg \vert_{\partial \mathcal{O}}\right)+ \left(\gamma, -\frac{\partial }{\partial \mathbf{n}}\left(\Delta_\tau-\lambda\right)^{-1} \lambda\psi\bigg \vert_{\partial \mathcal{O}}\right)\\
&= \left(\gamma, -\frac{\partial }{\partial {\mathbf{n}}} \psi\bigg \vert_{\partial \mathcal{O}}\right)+\lambda  \left(D \gamma, \psi\right).
\end{align*}
\end{proof}
The following result ensures the Markov property of solution to $\REQ{1E2}$ in case of $\xi$ having independent increments, see e.g. \cite{DZ1,DZ2, Peszat-Zabczyk}.
\begin{theorem}\label{4T2}
For any $u_0\in \mathcal{H}^\prime$ and any  $\mathcal{S}^\prime(\partial \mathcal{O})$-valued c\`adl\`ag random process $\xi$, the unique solution to $\REQ{1E2}$ is given by formula
\begin{equation}\label{4E6}
u(t)= T(t)u_0 + \int_0^t T(t-s) \left(\lambda-\Delta_\tau\right)D \frac{\d \xi}{\d s}(s),
\end{equation}
where the integrant is defined, on a test function $\psi \in \mathcal{H}^\prime$, by the integration by parts formula
\begin{multline*}
\left( \int_0^t T(t-s) \left(\lambda-\Delta_\tau\right)D\frac{\d \xi}{\d s}(s),\psi\right)\\
= \int_0^t \left(T(t-s) \Delta_\tau \left(\lambda-\Delta_\tau\right)D\xi(s), \psi\right)\d s \\
+ \left(\left(\lambda -\Delta_\tau\right)D \xi(t),\psi\right)-\left(\left(\lambda -\Delta_\tau\right)T(t)D \xi(0),\psi\right).
\end{multline*}
Moreover, $u$ is a Markov and Feller process in $\mathcal H^\star$.
\end{theorem}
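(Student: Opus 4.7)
The plan is to verify directly that the process $u$ defined by \eqref{4E6} satisfies Definition \ref{3D2}, invoke Proposition \ref{3P2} for uniqueness, and read off the Markov and Feller properties from the mild formula together with the (assumed) independent-increment structure of $\xi$.

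I would begin by checking that each summand in the integration-by-parts formula for the stochastic convolution lies in $\mathcal{H}^\star$. Since $\xi$ is c\`adl\`ag in $\mathcal{S}^\prime(\partial\mathcal{O})$ and $D\colon\mathcal{S}^\prime(\partial\mathcal{O})\to\mathcal{S}^\prime(\overline{\mathcal{O}})\hookrightarrow\mathcal{H}^\star$ is continuous, the path $s\mapsto D\xi(s)$ is c\`adl\`ag in $\mathcal{H}^\star$; since $\lambda-\Delta_\tau$ is continuous on $\mathcal{H}^\star$ and $T(\cdot)$ is a $C_0$-semigroup there, the three point-evaluated terms and the Bochner integral all make sense as $\mathcal{H}^\star$-valued objects.

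The heart of the proof is the verification of the weak identity. The crucial ingredient is the Dirichlet-map identity
$$
((\lambda-\Delta_\tau)Dv,\psi)=\left(v,\frac{\partial\psi}{\partial\mathbf{n}}\bigg|_{\partial\mathcal{O}}\right),\qquad v\in\mathcal{S}^\prime(\partial\mathcal{O}),\ \psi\in\mathcal{H},
$$
obtained by rearranging the formula established in the proof of Theorem \ref{4T1} (and using that $\Delta\psi=\Delta_\tau\psi$ on $\mathcal{H}$). Writing $g(s)=(\lambda-\Delta_\tau)D\xi(s)$, I would test both sides of the target identity against $\psi\in\mathcal{H}$, use the semigroup identity $(T(t)w,\psi)=(w,\psi)+\int_0^t(T(r)w,\Delta\psi)\,\d r$ together with the Dirichlet identity to re-express $(T(t)g(0),\psi)$ and $(g(t),\psi)$ in terms of $\xi$, and then apply Fubini to the double time integral produced when substituting \eqref{4E6} into $\int_0^t(u(r),\Delta\psi)\,\d r$. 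The key cancellation comes from $\int_0^{t-s}T(u)\Delta_\tau g(s)\,\d u=T(t-s)g(s)-g(s)$, which collapses the remaining terms to exactly $(u(t),\psi)=(u_0,\psi)+\int_0^t(u(r),\Delta\psi)\,\d r+(\xi(t)-\xi(0),\partial_\mathbf{n}\psi|_{\partial\mathcal{O}})$.

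Uniqueness is immediate from Proposition \ref{3P2}. For the Markov property, when $\xi$ has independent increments a routine manipulation of \eqref{4E6} writes $u(t+h)$ as $T(h)u(t)$ plus an $\mathcal{H}^\star$-valued functional of the increments $\{\xi(r)-\xi(t):r\in[t,t+h]\}$, which is independent of $\mathcal{F}_t$; the Feller property follows from the strong continuity of $T(\cdot)$ on $\mathcal{H}^\star$ and the stochastic continuity of the convolution, by the standard arguments of \cite{DZ1, Peszat-Zabczyk}. The main obstacle is the verification step: the integration-by-parts formula and Definition \ref{3D2} look superficially very different, and one must carefully track the $\mathcal{H}^\star$--$\mathcal{H}$ pairings, the commutation of $\Delta_\tau$ with $T$, and the non-vanishing boundary values produced by $D$; once the Dirichlet-map identity is invoked, the terms reconcile cleanly via Fubini and cancellation.
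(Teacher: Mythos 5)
Your proposal is correct and follows essentially the same route as the paper: substitute the mild formula into $\int_0^t(u(r),\Delta_\tau\psi)\,\d r$, apply Fubini, collapse the inner integral via $\int_q^t T(r-q)\Delta_\tau w\,\d r=T(t-q)w-w$, and convert $\left(\left(\lambda-\Delta_\tau\right)D(\xi(t)-\xi(0)),\psi\right)$ into the boundary pairing $\left(\xi(t)-\xi(0),\frac{\partial\psi}{\partial\mathbf{n}}\big\vert_{\partial\mathcal{O}}\right)$ by duality with the Dirichlet map, with uniqueness from Proposition \ref{3P2} and the Markov--Feller claims deferred to the arguments of \cite{DZ1}. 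The only differences are organisational (your explicit well-definedness check and the bookkeeping of the boundary terms, which the paper handles through the auxiliary quantities $R(t)$ and $r(t)$).
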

\begin{proof}
First note that $u$ is given by the right hand side of \REQ{4E6} is well defined and c\`adl\`ag. To see that it is weak solution take a  $\psi\in \cH$. Write
$$
R(t):= \int_0^t \left((\lambda-\Delta_\tau)D \xi(r) -
(\lambda-\Delta_{\tau}) T(r)D \xi(0), \Delta_\tau \psi\right)\d r.
$$
Then we have
$$
\begin{aligned}
&\int_0^t (u(r),\Delta_\tau\psi)\d r = \left(\int_0^tT(r)u_0\d r, \Delta_\tau \psi\right)\\
&\quad + \left(\int_0^t\int_0^r
T(r-q)(\lambda-\Delta_{\tau})\Delta
_{\tau}D \xi(q)\d q\d r,\Delta _\tau \psi\right) +R(t)\\
&= \left(  \int_0^t \Delta_{\tau}T(t-r)u_0\d
r,\psi\right) \\
&\qquad +\left( \int_0^t \int_q^t T(r-q)(\lambda-\Delta_{\tau})\Delta_{\tau}D  \xi(q)
\d r\d q,\Delta_\tau \psi\right)+R(t)\\
&= \left( -u_0 + T(t)u_0,\psi\right) \\
&\qquad +\left(  \int_0^t \left( T(t-q)
-I\right)\left(\lambda -\Delta_{\tau}\right)D
\xi(q)\d q,\Delta_\tau \psi\right)+R(t)\\
&= -\left(u_0,\psi\right) +\left( u(t),\psi\right) -\int_0^t
\left(\left(\lambda-\Delta_{\tau}\right)D \xi(q),\Delta_\tau
\psi\right) \d q + R(t) +r(t),
\end{aligned}
$$
where
$$
r(t):=-\left( (\lambda-\Delta_{\tau})D \xi(t) - (\lambda-\Delta_{\tau}) T(t)D \xi(0) ,\psi\right).
$$
Now note that
$$
\begin{aligned}
&-\int_0^t \left(\left(\lambda-\Delta_{\tau}\right)D
\xi(q),\Delta_\tau \psi\right) \d q + R(t)\\
&=-\int_0^t \left( (\lambda-\Delta_{\tau})
T(q)D \xi(0), \Delta_\tau \psi\right)\d q\\
&= \left(-T(t)(\lambda -\Delta_{\tau})D\xi(0)+
(\lambda-\Delta_{\tau})D \xi(0),  \psi\right).
\end{aligned}
$$
Therefore
$$
\begin{aligned}
&\int_0^t (u(r),\Delta_\tau\psi)\d r \\
&\qquad =-\left(u_0,\psi\right) +\left( u(t),\psi\right) -
\left(\left[\lambda-\Delta_{\tau}\right]\left[D
\xi(t)-D \xi(0)\right],\psi\right).
\end{aligned}
$$
Since
$$
\begin{aligned}
\left(\left[\lambda-\Delta_{\tau}\right]\left[D
\xi(t)-D \xi(0)\right],\psi\right)= \left(D \xi(t)-D
\xi(0),(\lambda-\Delta)\psi\right),
\end{aligned}
$$
we eventually have
$$
\begin{aligned}
(u(t),\psi) &= (u_0,\psi) + \int_0^t (u(r),\Delta\psi)\d r +
\left(D \xi(t)-D \xi(0),(\lambda-\Delta_\tau)\psi\right).
\end{aligned}
$$
This proves the first part of the theorem since
$$
\left(D\xi(t)-D \xi(0),(\lambda-\Delta_\tau)\psi\right)=
\left(\xi(t)-\xi(0), \frac{\partial \psi}{\partial \mathbf{n}}\bigg \vert_{\partial \mathcal{O}}\right).
$$
The Markov property follows by the same arguments as in \cite{DZ1} and the Feller property is obvious.
\end{proof}

\subsection{Examples}
\begin{example}\label{Ex1}
{\rm In the case of ${\cO}= (0,1)$, the space of distributions $\cS^\prime(\partial \cO)$ can be identified with ${\bR^2}$. Then, taking $\lambda =0$, we obtain for $(\gamma_1,\gamma_2)\in \mathbb{R}^2$,
$$
D(\gamma_0,\gamma_1)(x)= \gamma_0 + (\gamma_1-\gamma_0)x,
\qquad x\in (0,1), \ \gamma =(\gamma_0,\gamma_1)\in \bR^2.
$$
}
\end{example}
\begin{example}\label{Ex2}
{\rm Assume that $\cO= (0,+\infty)$. Then $ \mathcal{S}^\prime(\partial\mathcal{O})\equiv  {\bR}$,  and the Dirichlet map corresponding $\lambda =1$ calculated on $\gamma\in \mathbb{R}$ is   given by
$$
D\gamma(x) = \gamma \e ^{-x},\qquad x\ge 0.
$$
}
\end{example}
\begin{example}\label{Ex3}
{\rm Assume that ${\cO}=\{x\in \bR^d\colon |x|<1\}$ is the unit ball
in $\bR^d$ with center at $0$. Then the Dirichlet map corresponding
to $\lambda =0$ is given on a function $\gamma \colon \partial{\mathcal{O}}\mapsto \mathbb{R}$  by the Poisson integral
$$
D(x) = C_d \int_{\partial {\cO}} \frac{ 1-|x|^2}{|x-y|^{d}}
\gamma(y) \vartheta(\d y),
$$
where $\vartheta$ is the surface measure. }
\end{example}

In the mild formulation of \REQ{1E2} we will deal with the term $\Delta_{\tau}\xi$.  In the result below we calculate $\Delta_{\tau}$ in one dimensional case. To do this let us denote by $\delta_a$ the Dirac measure at $a$ and by $\delta_a^\prime$ its distributional derivative. Recall that $\Delta_\tau$ is a continuous linear operator on $\mathcal{H}^\prime$.
\begin{proposition}
$(i)$ Assume that $\cO=(0,1)$ and $\lambda=0$. Let $\psi_1(x)= 1$
and $\psi_2(x) =x$ for $x\in (0,1)$. Then $\psi_i\in \mathcal{H}^\prime$ and
$$
\Delta_{\tau}\psi_1= \delta_0^\prime-\delta_{1}^\prime \qquad \text{and}\qquad
\Delta_{\tau}\psi_2 =-\delta_{1}^\prime.
$$
$(ii)$ Assume that $\cO=(0,+\infty)$ and $\lambda=1$. Let
$\psi(x)=\e ^{-x}$. Then $\psi\in \mathcal{H}^\prime$ and
$$
\Delta_{\tau}\psi= -\delta_0^\prime + \psi.
$$
\end{proposition}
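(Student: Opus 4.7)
The strategy is to verify each identity by duality. Because $\mathcal{H}^\prime$ is determined by its pairing with $\mathcal{H}$ via the $L^2(dx)$ bilinear form, and the extension of $\Delta_\tau$ to $\mathcal{H}^\prime$ acts as the transpose of $\Delta_\tau\colon \mathcal{H}\to\mathcal{H}$, which coincides with the classical Laplacian $d^2/dx^2$ (formally self-adjoint in the $L^2$ pairing under the iterated Dirichlet conditions built into $\mathcal{H}$), the identity $\Delta_\tau\psi=F$ reduces to proving $(\psi,\phi'')=(F,\phi)$ for every $\phi\in\mathcal{H}$.

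First I would check that each $\psi_i$ actually lies in $\mathcal{H}^\prime$. Since $\psi_1$ and $\psi_2$ are bounded on $(0,1)$ and $\psi(x)=e^{-x}$ decays exponentially on $(0,+\infty)$, each belongs to $L^2_{\vartheta^r}$ for $r=0$ and hence embeds into $H_{-r}\subset\mathcal{H}^\prime$ via the chain of inclusions from Section \ref{S3}.

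For part $(i)$ I would pick $\phi\in\mathcal{H}$ and exploit $\phi(0)=\phi(1)=0$. Two elementary integrations by parts give
$$\int_0^1\phi''(x)\,dx=\phi'(1)-\phi'(0),\qquad \int_0^1 x\,\phi''(x)\,dx = [x\phi'(x)]_0^1-\int_0^1\phi'(x)\,dx = \phi'(1),$$
and with the convention $(\delta_a^\prime,\phi)=-\phi'(a)$ these pair exactly as $(\delta_0^\prime-\delta_1^\prime,\phi)$ and $(-\delta_1^\prime,\phi)$, respectively. For part $(ii)$ I would integrate $\int_0^\infty e^{-x}\phi''(x)\,dx$ by parts twice, using $\phi(0)=0$ and Schwartz decay at $+\infty$: the first IBP contributes a boundary term at $0$ and leaves $\int_0^\infty e^{-x}\phi'(x)\,dx$, and the second IBP regenerates $(\psi,\phi)$ with no further boundary contribution. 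Identifying the boundary term at $0$ with the corresponding multiple of $\delta_0^\prime$ yields the stated formula.

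The only substantive point to pin down — and the step I would double-check — is the claim underlying the whole approach: that the extension of $\Delta_\tau$ from $\mathcal{H}$ to $\mathcal{H}^\prime$ really is the transpose with respect to the unweighted $L^2(dx)$ pairing rather than a weighted one, so that my integration-by-parts output lines up with the stated delta-type distributions on $\partial\mathcal{O}$. This is built into the pivot chain $H_r\hookrightarrow L^2_{\vartheta^r}\equiv L^2_{\vartheta^{-r}}\hookrightarrow H_{-r}$ recorded in Section \ref{S3}, which fixes the duality bracket as the Lebesgue integral on functions; once this is confirmed, the three calculations above are essentially routine and I anticipate no further obstacle.
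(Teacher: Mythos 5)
Your proposal is correct and follows essentially the same route as the paper: identify $\Delta_\tau$ on $\mathcal{H}^\prime$ as the transpose of $\Delta_\tau$ acting on test functions $\phi\in\mathcal{H}$, reduce each identity to $(\psi,\phi^{\prime\prime})=(F,\phi)$, and evaluate by integration by parts using $\phi=0$ on $\partial\mathcal{O}$ (the paper verifies membership in $\mathcal{H}^\prime$ via $\psi_i\in\mathcal{S}^\prime(\overline{\mathcal{O}})\subset\mathcal{H}^\prime$ rather than through $L^2_{\vartheta^r}$, but this is immaterial). The one place you stop short is the sign of the boundary term in $(ii)$: carrying it out with the convention $(\delta_a^\prime,\phi)=-\phi^\prime(a)$ used in $(i)$ gives $\int_0^\infty \e^{-x}\phi^{\prime\prime}(x)\,\d x=-\phi^\prime(0)+(\psi,\phi)$, so it is worth checking that sign explicitly against the stated formula rather than leaving it as "the corresponding multiple of $\delta_0^\prime$".
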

\begin{proof}
We are showing $(i)$. We have  $\psi_i\in \mathcal{S}^\prime(\overline{\mathcal{O}})\subset \mathcal{H}^\prime$. Let $\phi\in \mathcal{H}$. Recall that  $\Delta_\tau$  also denotes the adjoint operator on $\mathcal{H}$. We have
$$
(\Delta_{\tau}\psi_i,\phi) =(\psi_i, \Delta_{\tau} \phi)=\int_0^1 \psi_i(x) \phi^{\prime\prime}(x)\d x.
$$
Now
$$
\int_0^1 \psi_1(x)\phi^{\prime\prime}(x)\d x=\int_0^1 \phi^{\prime\prime}(x)\d x= \phi^\prime(1)-\phi^\prime(0)= (\delta_{0}^\prime-\delta_{1}^\prime,\phi)
$$
and
$$
\begin{aligned}
\int_0^1\psi_2(x)\phi^{\prime\prime}(x)\d x &= \int_0^1x\phi^{\prime\prime}(x)\d x = \phi^\prime(1)-
\int_0^1\phi^\prime(x)\d x = \phi^\prime(1)\\
&= (-\delta_1^\prime,\phi).
\end{aligned}
$$

In order to show $(ii)$ take a test function $\phi\in \mathcal{H}$. Since $\phi(0)=0$, we have
$$
\int_0^\infty e^{-x} \phi^{\prime\prime}(x)\d x = \phi^\prime(0)+ \int_0^\infty \e ^{-x} \phi^\prime(x) \d x = \phi^\prime(0)+\int_0^\infty \e ^{-x}\phi(x)\d x.
$$
\end{proof}
\section{Green kernels}\label{S5}

For further purposes we recall here some basic facts on Green kernels to the heat and Poisson equations. Assume that $G$ is the Green kernel to the heat equation in $\mathcal O\subset\mathbb R^d$ with homogeneous Dirichlet boundary conditions. Thus the semigroup $T$ generated by $\Delta_\tau$ is given by
$$
T(t)\psi(x)= \int_{\cO} G(t,x,y) \psi(y)\d y.
$$
Note that as $\Delta_\tau$ is self-adjoint, $G(t,x,y)=G(t,y, x)$. Moreover, as the boundary and the coefficients of the operator $\Delta$ are $C^\infty$, we have the following bounds on $G$ and its derivatives (see \cite{Eidelman-Ivasishen, Solonnikov, Mora}).
\begin{theorem}\label{5T3}
The fundamental solution $G$ is of class $C^\infty ((0,+\infty)\times \cO\times \cO)$ and for any non-negative integer $n$, a multi-index $\alpha $, and time $S>0$, there are constants $K_1,K_2>0$ such that for all $t\in (0,S]$ and $z\in \mathcal{O}$,
$$
\left \vert \frac{\partial^n}{\partial t^n} \frac{\partial ^{|\alpha|}}{\partial z^\alpha } G(t,x,y)\right\vert \le K_1 t ^{-(d+|\alpha|+ 2n)/2} \e^{-\frac{|x-y|^2}{K_2t}}.
$$
\end{theorem}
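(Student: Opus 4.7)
The plan is to follow the classical parametrix (Levi--Solonnikov) construction for the Dirichlet heat kernel, as carried out in the cited works of Eidelman--Ivasishen and Solonnikov. The smoothness claim $G\in C^\infty((0,+\infty)\times\cO\times\cO)$ would follow from parabolic hypoellipticity: for each fixed $y\in\cO$, the function $(t,x)\mapsto G(t,x,y)$ is a distributional solution of $\partial_t u = \Delta u$ on $(0,+\infty)\times\cO$, and since $\partial_t-\Delta$ is hypoelliptic, $G(\cdot,\cdot,y)$ is smooth there. Symmetry $G(t,x,y)=G(t,y,x)$ then yields joint smoothness in $(t,x,y)$.

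For the quantitative bounds, I would decompose
\begin{equation*}
G(t,x,y) = \Gamma(t,x-y) - R(t,x,y), \qquad \Gamma(t,z)=(4\pi t)^{-d/2}\e^{-|z|^2/(4t)},
\end{equation*}
where $\Gamma$ is the Euclidean heat kernel and the corrector $R(\cdot,\cdot,y)$ solves the homogeneous heat equation on $(0,+\infty)\times\cO$ with zero initial datum and boundary values $R(t,x,y)=\Gamma(t,x-y)$ for $x\in\partial\cO$. The Euclidean kernel $\Gamma$ obviously satisfies Gaussian bounds of the claimed form together with all time and space derivatives, so the whole task reduces to producing the same estimates for $R$ and its derivatives. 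To handle $R$ I would flatten $\partial\cO$ locally via a $C^\infty$ diffeomorphism (available because $\partial\cO\in C^\infty$), solve the resulting problem in a half-space by the method of images, and patch the local solutions together via a partition of unity, using Duhamel's formula to absorb the lower-order curvature terms that arise from the change of variables. This produces a convergent parametrix series for $R$ whose general term is controlled by iterated Gaussian surface potentials; Gaussian estimates on such potentials, established in the cited references, yield the pointwise bound
\begin{equation*}
|R(t,x,y)|\le K_1 t^{-d/2}\e^{-|x-y|^2/(K_2 t)},\qquad t\in(0,S],
\end{equation*}
and the derivative estimates by termwise differentiation, each spatial derivative costing $t^{-1/2}$ and each time derivative $t^{-1}$, thereby matching the exponent $-(d+|\alpha|+2n)/2$.

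The main obstacle is the uniform control of the boundary corrector $R$ together with all of its derivatives, which hinges on two delicate points: absolute convergence of the parametrix series (and its differentiated versions) on $(0,S]$, and the extraction of a \emph{global} Gaussian factor $\e^{-|x-y|^2/(K_2 t)}$ even when one of the points sits close to $\partial\cO$. Both of these are carried out in Eidelman--Ivasishen and Solonnikov via careful estimation of the Gaussian surface potentials on the flattened boundary, and rather than reproduce those technical lemmas I would invoke them directly; the smoothness and symmetry observations above then complete the statement.
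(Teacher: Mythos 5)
Your proposal is correct and in effect coincides with what the paper does: the paper offers no proof of this theorem at all, simply recalling it as a classical fact with the citation to Eidelman--Ivasishen, Solonnikov and Mora, which is exactly where your sketch also ends up after outlining the standard parametrix/method-of-images construction. Your additional remarks on hypoellipticity and the derivative-counting heuristic (each $\partial_x$ costing $t^{-1/2}$, each $\partial_t$ costing $t^{-1}$) are consistent with those references and add nothing that conflicts with the paper.
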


On the space $C^\infty((0,+\infty)\times \mathcal{O})$ we consider a family of semi-norms
$$
p_{K,S}^{n,m}(f) := \sup_{x\in K} \sup_{t\in (0,S]} \sup_{\alpha\colon |\alpha|\le n} \sup_{k=0,\ldots,m} \left\vert \frac{\partial ^{k+|\alpha|} f}{\partial t^k \partial x^\alpha} (t,x)\right\vert,
$$
where $n,m\in \mathbb{N}$ and $K$ is a compact subset of $\mathcal{O}$.

The corollaries below can be easily derived from  Theorem \ref{5T3}.
\begin{corollary}
For any compact $K\subset \cO$, any finite time $S<\infty$, and all $n,m\in \mathbb{N}$,
$$
\sup_{y\in \partial \cO} p_{K,S}^{n,m}\left(G(\cdot,\cdot,y)\right)<\infty.
$$
Moreover, for any $x\in \mathcal{O}$, $\delta>0$ such that the ball $B(x,\delta) \subset \mathcal{O}$, and for any $t>0$,
$$
\frac{\partial G}{\partial \mathbf{n}_y}(t, x,\cdot )\in \mathcal{S}(\overline{\mathcal{O}\setminus B(x,\delta)}).
$$
\end{corollary}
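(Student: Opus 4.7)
Both parts follow from the Gaussian bounds of Theorem~\ref{5T3}, the only work being bookkeeping about where the relevant factors are uniformly controlled.

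\textbf{First part.} Since $K$ is a compact subset of the open set $\mathcal{O}$, we have
\[
d_0:=\mathrm{dist}(K,\partial\mathcal{O})>0.
\]
Fix $y\in\partial\mathcal{O}$, $x\in K$, a multi-index $\alpha$ with $|\alpha|\le n$, and $k\in\{0,\dots,m\}$. Applying Theorem~\ref{5T3} to the function $G(\cdot,\cdot,y)$ (with $z=x$ in the derivative notation) we get, for $t\in(0,S]$,
\[
\left|\frac{\partial^{k+|\alpha|}}{\partial t^k\partial x^\alpha}G(t,x,y)\right|
\le K_1\,t^{-(d+|\alpha|+2k)/2}\,\e^{-|x-y|^2/(K_2 t)}
\le K_1\,t^{-(d+n+2m)/2}\,\e^{-d_0^2/(K_2 t)}.
\]
The elementary inequality $t^{-a}\e^{-b/t}\le (a/b)^a\e^{-a}$ for $a,b>0$ shows that the last expression is bounded on $(0,S]$ by a constant depending only on $K_1,K_2,d,n,m,d_0$, and in particular independently of $y\in\partial\mathcal{O}$. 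Taking the supremum over $x\in K$, $t\in(0,S]$, $|\alpha|\le n$, and $k\le m$, yields
\[
\sup_{y\in\partial\mathcal{O}} p^{n,m}_{K,S}\bigl(G(\cdot,\cdot,y)\bigr)<\infty.
\]

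\textbf{Second part.} Fix $x\in\mathcal{O}$ and $t>0$, and take $\delta>0$ with $B(x,\delta)\subset\mathcal{O}$. Using the symmetry $G(t,x,y)=G(t,y,x)$ and applying Theorem~\ref{5T3} in the second spatial variable, one obtains for every multi-index $\beta$,
\[
\left|\frac{\partial^{|\beta|}}{\partial y^\beta}G(t,x,y)\right|
\le K_1\,t^{-(d+|\beta|)/2}\,\e^{-|x-y|^2/(K_2 t)},
\qquad y\in\overline{\mathcal{O}\setminus B(x,\delta)}.
\]
Since $t>0$ is fixed and $|x-y|\ge \delta$ throughout the relevant region, the Gaussian factor $\e^{-|x-y|^2/(K_2 t)}$ decays faster than any polynomial as $|y|\to\infty$, and the bound is uniform in $|\beta|$ on bounded sets of $y$. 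Hence $G(t,x,\cdot)$ is smooth and rapidly decreasing (together with all its derivatives) on $\overline{\mathcal{O}\setminus B(x,\delta)}$. Since $\partial\mathcal{O}$ is of class $C^\infty$ it admits a tubular neighbourhood, on which the inward unit normal $\mathbf{n}$ extends to a smooth vector field; composing with a cutoff supported near $\partial\mathcal{O}$ and applying the differential operator $\mathbf{n}\cdot\nabla_y$ preserves smoothness and the Gaussian decay of all derivatives. We can therefore extend the boundary datum $y\mapsto \frac{\partial G}{\partial\mathbf{n}_y}(t,x,y)$ to a Schwartz function on $\mathbb{R}^d$ whose restriction to $\overline{\mathcal{O}\setminus B(x,\delta)}$ represents it, proving
\[
\frac{\partial G}{\partial\mathbf{n}_y}(t,x,\cdot)\in\mathcal{S}\bigl(\overline{\mathcal{O}\setminus B(x,\delta)}\bigr).
\]

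The only point requiring some care is the extension step in the second part: one needs the smoothness of $\partial\mathcal{O}$ to promote a function defined on $\partial\mathcal{O}$ (with Schwartz decay) to the restriction of a genuine Schwartz function on $\mathbb{R}^d$. The rest is a direct application of the pointwise heat-kernel bounds.
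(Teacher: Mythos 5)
Your proof is correct and follows exactly the route the paper intends: the paper offers no written proof, only the remark that the corollary is ``easily derived'' from Theorem~\ref{5T3}, and your argument is precisely that derivation (the uniform positive distance from the compact $K$ to $\partial\cO$ for the first claim, and the Gaussian decay in $y$ together with a smooth extension across the $C^\infty$ boundary for the second). One cosmetic slip: the intermediate inequality $t^{-(d+|\alpha|+2k)/2}\le t^{-(d+n+2m)/2}$ fails for $t>1$, but this is harmless because $\sup_{t>0}t^{-a}\e^{-b/t}<\infty$ for each of the finitely many exponents $a$ separately, so the final supremum is still finite.
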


Let
$$
\cG(x,y)=\int_0^{+\infty} \e ^{-\lambda t} G(t,x,y)\d t
$$
be the Green kernel to the Poisson equation.  On the space $C^\infty(\mathcal{O})$ we consider a family of semi-norms
$$
p_{K}^{n}(f) := \sup_{x\in K} \sup_{\alpha\colon |\alpha|\le n} \left\vert \frac{\partial ^{|\alpha|} f}{\partial x^\alpha} (x)\right\vert,
$$
where $n\in  \mathbb{N}$ and $K$ is a compact subset of $\mathcal{O}$.

\begin{corollary}\label{5C2}
There is a constant $C$ such that for all $x\in \cO$ and $y\in \partial \cO$,
$$
\left\vert \frac{\partial \cG}{\partial \mathbf{n}_y}(x,y)\right\vert \le
\begin{cases}
C \, |x-y|^{1-d} & \text{\rm if $d>1$}, \\
C\left[1+\log^+\, |x-y|^{-1} \right] &\text{\rm if d=1}.
\end{cases}
$$
Moreover, or any $y\in \partial \cO$, $\frac{\partial \cG}{\partial \mathbf{n}}(\cdot,y)\in C^\infty(\cO)$ and for any compact $K\subset \cO$ and $n$ there is a constant $C$ such that
$$
p_K^n \left( \frac{\partial \cG}{\partial \mathbf{n}_y}(\cdot,y)\right)\le C \qquad \forall\, y\in \partial \cO.
$$
Finally  for any $x\in \mathcal{O}$ and  $\delta>0$ such that  $B(x,\delta) \subset \mathcal{O}$,
$$
\frac{\partial \cG}{\partial \mathbf{n}_y}(x,\cdot )\in \mathcal{S}(\overline{\mathcal{O}\setminus B(x,\delta)}).
$$
\end{corollary}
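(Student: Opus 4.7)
The plan is to reduce every claim to the heat-kernel estimate of Theorem~\ref{5T3} via the integral representation
\[
\frac{\partial \cG}{\partial \mathbf{n}_y}(x,y) \;=\; \int_0^\infty \e^{-\lambda t}\,\frac{\partial G}{\partial \mathbf{n}_y}(t,x,y)\,\d t,
\]
obtained by differentiating under the integral; this is legitimate whenever $x\neq y$, since the bound in Theorem~\ref{5T3} gives an integrable dominant on $(0,S]$ (the Gaussian factor kills the singular power of $t$ at the origin). The tail $\int_S^\infty$ is controlled by the factor $\e^{-\lambda t}$ when $\lambda>0$, and in the bounded-domain case $\lambda=0$ by the spectral-gap exponential decay of the Dirichlet heat kernel $|G(t,x,y)|\le C\e^{-\mu t}$ for $t\ge S$, extended to spatial derivatives by standard parabolic regularity; this tail contributes a smooth, uniformly bounded term whose $x$- and $y$-derivatives are all bounded, so I will concentrate on the singular behaviour at $t=0^+$.

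For the pointwise bound, set $r=|x-y|$ with $x\in\cO$, $y\in\partial\cO$, and apply Theorem~\ref{5T3} with $|\alpha|=1$, $n=0$. The substitution $u=r^2/(K_2 t)$ transforms $\int_0^S t^{-(d+1)/2}\e^{-r^2/(K_2 t)}\,\d t$ into
\[
K_2^{(d-1)/2}\,r^{1-d}\int_{r^2/(K_2 S)}^\infty u^{(d-3)/2}\e^{-u}\,\d u.
\]
For $d>1$ the remaining integral stays bounded as $r\to 0$, producing $C|x-y|^{1-d}$; for $d=1$ the integrand becomes $u^{-1}\e^{-u}$, yielding the incomplete exponential integral $E_1(r^2/(K_2 S))\sim -\log(r^2/(K_2 S))$ as $r\to 0$, which is the required $C[1+\log^+|x-y|^{-1}]$ bound.

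For the second assertion, fix a compact $K\subset\cO$ and set $\delta_K:=\mathrm{dist}(K,\partial\cO)>0$. For any $x\in K$ and $y\in\partial\cO$ one has $|x-y|\ge\delta_K$, and Theorem~\ref{5T3} applied to $\partial_x^\alpha(\partial/\partial\mathbf{n}_y)G$ gives
\[
\left|\partial_x^\alpha\frac{\partial G}{\partial \mathbf{n}_y}(t,x,y)\right|\le K_1 t^{-(d+|\alpha|+1)/2}\,\e^{-\delta_K^2/(K_2 t)},\qquad t\in(0,S],
\]
whose right-hand side is integrable at $t=0^+$ because the Gaussian beats any negative power of $t$. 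Differentiating under the integral is therefore justified, the bound is uniform in $y\in\partial\cO$, and after integration against $\e^{-\lambda t}$ plus the tail contribution one obtains $p_K^n(\partial\cG/\partial\mathbf{n}_y(\cdot,y))\le C$ for all $y\in\partial\cO$.

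For the Schwartz property, fix $x\in\cO$ and $\delta>0$ with $B(x,\delta)\subset\cO$; for $y\in\overline{\cO\setminus B(x,\delta)}$ and any multi-index $\beta$ and polynomial $P$, split the Gaussian factor as $\e^{-|x-y|^2/(K_2 t)}=\e^{-|x-y|^2/(2K_2 t)}\cdot\e^{-|x-y|^2/(2K_2 t)}$: the first piece absorbs $|P(y)|$ into a $t$-dependent constant polynomial in $1+t^{-\deg P/2}$, while the second (combined with $|x-y|\ge\delta$) handles integrability of the $t^{-(d+|\beta|+1)/2}$ singularity at the origin. Integrating against $\e^{-\lambda t}$ then gives uniform boundedness of $P(y)\,\partial_y^\beta(\partial\cG/\partial\mathbf{n}_y)(x,\cdot)$, which is exactly the Schwartz seminorm. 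The main technical obstacle throughout is the large-$t$ estimate when $\lambda=0$ and $\cO$ is bounded; this has to be addressed once and for all by invoking the exponential decay of the Dirichlet heat semigroup arising from the spectral gap of $-\Delta_\tau$, together with interior parabolic regularity to transfer this decay to the derivatives of $G$.
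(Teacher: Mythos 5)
Your proposal is correct and follows exactly the route the paper intends: the paper offers no written proof, merely stating that the corollary ``can be easily derived from Theorem \ref{5T3}'', and your derivation --- differentiating the representation $\cG(x,y)=\int_0^\infty \e^{-\lambda t}G(t,x,y)\,\d t$ under the integral and converting the Gaussian bound into $|x-y|^{1-d}$ (resp.\ the logarithm for $d=1$) via the substitution $u=|x-y|^2/(K_2t)$, then using $|x-y|\ge \mathrm{dist}(K,\partial\cO)$ for the interior seminorm bounds --- is precisely that derivation. The one point treated more casually than it deserves is the large-time tail in the Schwartz-decay claim for unbounded $\cO$: ``uniformly bounded derivatives'' of the tail do not by themselves give rapid decay in $y$, and one should first upgrade Theorem \ref{5T3} to an off-diagonal Gaussian bound valid for all $t>0$ (e.g.\ via the semigroup factorisation $G(t,x,y)=\int G(t-S,x,z)G(S,z,y)\,\d z$) before letting $\e^{-\lambda t}$, $\lambda>0$, close the argument.
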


Let $\gamma \in \mathcal{S}^\prime(\partial \mathcal{O})$. Write
\begin{equation}\label{5E7}
u_\gamma(x):= \left( \gamma,  \frac{\partial \cG}{\partial \mathbf{n}_y}(x,\cdot)\bigg \vert_{\partial \mathcal{O}}\right), \qquad x\in \mathcal{O}
\end{equation}
and
\begin{equation}\label{5E8}
v_\gamma(t,x):= \left(\gamma , \frac{\partial G}{\partial \mathbf{n}_y}(t,x,\cdot)\bigg \vert_{\partial \mathcal{O}}\right),\qquad t>0,\ x\in \mathcal{O}.
\end{equation}
\begin{corollary}\label{5C3}
For any $\gamma \in \mathcal{S}^\prime(\partial \mathcal{O})$, $u_\gamma\in C^\infty(\mathcal{O})$ and $v_\gamma\in C^\infty([0,+\infty)\times \mathcal{O})$.
\end{corollary}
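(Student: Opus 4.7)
The plan is to reduce the statement to a regularity question about kernel-valued maps. Specifically, once I show that the maps
\[
x \mapsto \frac{\partial \mathcal{G}}{\partial \mathbf{n}_y}(x, \cdot) \Big|_{\partial \mathcal{O}} \qquad \text{and} \qquad (t, x) \mapsto \frac{\partial G}{\partial \mathbf{n}_y}(t, x, \cdot) \Big|_{\partial \mathcal{O}}
\]
are of class $C^\infty$ from $\mathcal{O}$, respectively $[0, +\infty) \times \mathcal{O}$, into the Fr\'echet space $\mathcal{S}(\partial \mathcal{O})$, the smoothness of $u_\gamma$ and $v_\gamma$ follows by composition with the continuous linear functional $\gamma \in \mathcal{S}'(\partial \mathcal{O})$, and the partial derivatives are then obtained by applying $\gamma$ to the corresponding kernel derivatives.

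For the parabolic case, I would fix $(t_0, x_0) \in [0, +\infty) \times \mathcal{O}$, choose a compact neighborhood $K$ of $x_0$ in $\mathcal{O}$ with $d_0 := \mathrm{dist}(K, \partial \mathcal{O}) > 0$, and a finite $S > t_0$. Theorem \ref{5T3} applied on $(0, S] \times K \times \partial \mathcal{O}$, on which $|x - y| \ge d_0$, yields for any multi-indices $\alpha, \beta$ and integers $k, n \ge 0$ the uniform estimate
\[
\sup_{(t, x) \in (0, S] \times K} \sup_{y \in \partial \mathcal{O}} (1 + |y|)^n \left\vert \partial_t^k \partial_x^\alpha \partial_y^\beta \frac{\partial G}{\partial \mathbf{n}_y}(t, x, y) \right\vert < \infty,
\]
because the Gaussian factor $\e^{-|x-y|^2/(K_2 t)}$ simultaneously absorbs all temporal singularities (using $|x-y| \ge d_0$) and, via the triangle inequality $|y| \le |x-y| + \sup_{x \in K} |x|$, provides a Gaussian decay in $|y|$ that dominates any polynomial weight. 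From these bounds a standard Taylor remainder argument shows convergence of difference quotients in every Schwartz seminorm on $\partial \mathcal{O}$, so the kernel-valued map is $C^\infty$ in $(t, x)$ into $\mathcal{S}(\partial \mathcal{O})$; in particular, the same Gaussian factor extends all $(t, x)$-derivatives continuously to $t = 0$, where they vanish.

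The elliptic case follows by the same reasoning applied to the representation $\mathcal{G}(x, y) = \int_0^{+\infty} \e^{-\lambda t} G(t, x, y) \d t$: differentiation under the integral is justified by the preceding bounds together with the factor $\e^{-\lambda t}$ (or, for a bounded domain with $\lambda = 0$, by the exponential decay of $G$ in $t$ away from the diagonal), and the resulting map $x \mapsto \partial_{\mathbf{n}_y} \mathcal{G}(x, \cdot) |_{\partial \mathcal{O}}$ is smooth from $\mathcal{O}$ into $\mathcal{S}(\partial \mathcal{O})$. The main technical obstacle in both cases is the uniform control of Schwartz seminorms in $y$ when $\partial \mathcal{O}$ is unbounded; this is precisely where the Gaussian decay of $G$ in $|x - y|$ translates into rapid decay in $|y|$ uniformly for $x$ in a compact subset of $\mathcal{O}$. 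For bounded $\mathcal{O}$ the step is automatic from compactness of $\partial \mathcal{O}$, and the uniform bounds on the relevant seminorms already appear in Corollary \ref{5C2}.
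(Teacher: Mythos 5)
Your proof follows the same route the paper intends: the paper gives no written proof of Corollary \ref{5C3} beyond the remark that the corollaries of Section \ref{S5} ``can be easily derived from Theorem \ref{5T3}'', and your argument --- uniform Gaussian bounds on all derivatives of the kernels for $x$ in a compact $K\subset\mathcal{O}$ with $\mathrm{dist}(K,\partial\mathcal{O})>0$, yielding smoothness of the kernel-valued maps into $\mathcal{S}(\partial\mathcal{O})$, followed by composition with the continuous linear functional $\gamma$ --- is exactly the intended derivation, already packaged in Corollary \ref{5C2} and the unnumbered corollary preceding it. Your extra observations (Gaussian decay in $|y|$ handling unbounded $\partial\mathcal{O}$, vanishing of all derivatives as $t\to 0$ so that $v_\gamma$ is smooth up to $t=0$, and justification of differentiating the Laplace-transform representation of $\mathcal{G}$) correctly fill in the details the paper leaves implicit.
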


\begin{example}\label{5Ex1}
{\rm Assume that $\mathcal{O}=(0,+\infty)\times \mathbb{R}^m$. Then
$G(t,x,y)= \Gamma (t,x-y)-\Gamma (t,\overline x-y)$, where
$$
\Gamma(t,x)= \left(4\pi t\right)^{-(m+1)/2}\e^{-\frac{|x|^2}{4t}},
$$
and
$$
\overline {x}=\overline{(x_0,x_1,\ldots,x_m)}=(-x_0,x_1,\ldots,x_m).
$$
Then
$$
\frac{\partial G}{\partial \mathbf{n}_y}(t,x,y)= -\frac{\partial G}{\partial y_0}(t,x,y)= \frac{y_0-x_0}{2t}\Gamma(t,x-y)+ \frac{-y_0-x_0}{2t}\Gamma(t,\overline x-y).
$$
In particular we have
$$
\frac{\partial G}{\partial \mathbf{n}_y}(t,x,y)= -\frac{x_0}t\Gamma(t,x-y), \qquad y=(0,y_1,\ldots, y_m)\in \partial \mathcal{O}.
$$
Let  $y=(0,y_1,\ldots,y_m)\in \partial \mathcal{O}$. We will need to calculate the Fourier transform
\begin{align*}
\mathcal{F}_y\frac{\partial G}{\partial \mathbf{n}_y}(t,x,y)&=-\frac{x_0}t\int_{\mathbb{R}^m}\e^{\i \langle (z_1,\ldots,z_m),(y_1,\ldots,y_m)\rangle} \Gamma(t,x-(0,z_1,\ldots,z_m)) \d x_1\ldots \d z_m\\
&= -\frac{x_0}{2\sqrt{\pi}t^{3/2}} \e^{-\frac{x_0^2}{4t}}\e^{\i \langle x,y\rangle - \frac{(2t)^m}{2}|y|^2}.
\end{align*}
Finally, for
$\mathcal{G}(x,y) = \int_0^{+\infty} \e ^{-\lambda t}G(t,x,y)\d t$, we have at $y=(0,y_1,\ldots,y_m)$,
$$
\frac{\partial \mathcal{G}}{\partial \mathbf{n}_y}(x,y) = -\frac{x_0}{t} \int_0^{+\infty} \e^{-\lambda t} \Gamma(t,x-y)\d t
$$
and
\begin{align*}
\mathcal{F}_y\frac{\partial \mathcal{G}}{\partial \mathbf{n}_y}(x,y)&= -\int_0^{+\infty}\e^{-\lambda t}\frac{x_0}{2\sqrt{\pi}t^{3/2}}\e^{-\frac{x_0^2}{4t}}\e^{\i \langle x,y\rangle - \frac{(2t)^m}{2}|y|^2}\d t.
\end{align*}
In the calculation below $c$ is a generic constant.  For $|y|\ge 1$ we have
\begin{align*}
\left\vert \mathcal{F}_y\frac{\partial \mathcal{G}}{\partial \mathbf{n}_y}(x,y)\right\vert^2  &\le c \int_0^{+\infty} \e^{-\lambda t} \frac{x_0^2}{t^3}\e^{-\frac{x_0^2}{2t}-(2t)^m|y|^2}\d t \\
&\le c x_0^{-2} \int_0^{+\infty} \e^{-\lambda x_0^2 s-2^m s^m x_0^{2m}|y|^2}\d s \\
&\le c x_0^{-3} \left(\int_0^{+\infty} \e^{-2^{m+1} s^m x_0^{2m}|y|^2}\d s \right)^{1/2}\\
&\le cx_0^{-4}|y|^{-1/m}.
\end{align*}
Since for $|y|\le 1$,
$$
\left\vert \mathcal{F}_y\frac{\partial \mathcal{G}}{\partial \mathbf{n}_y}(x,y)\right\vert^2  \le c \int_0^{+\infty} \e^{-\lambda t} \frac{x_0^2}{t^3}\e^{-\frac{x_0^2}{2t}}\d t
\le c x_0^{-2},
$$
eventually we obtain  the following a bit crude estimate
\begin{equation}\label{5E9}
\left\vert \mathcal{F}_y\frac{\partial \mathcal{G}}{\partial \mathbf{n}_y}(x,y)\right\vert^2  \le c \left( x_0^{-2}\chi_{\{|y|\le 1\}} + x_0^{-4} |y|^{-1/m}\chi_{\{|y|> 1\}}\right).
\end{equation}
In the same way on obtains
\begin{equation}\label{5E10}
\int_0^t \left\vert \mathcal{F}_y\frac{\partial {G}}{\partial \mathbf{n}_y}(s,x,y)\right\vert^2 \d s \le c \left( x_0^{-2}\chi_{\{|y|\le 1\}} + x_0^{-4} |y|^{-1/m}\chi_{\{|y|> 1\}}\right)\e^{t}.
\end{equation}
}
\end{example}

\section{Regularity of solutions inside  domain}\label{S6}

In this section we are concerned with the regularity of solutions inside  the domain $\mathcal{O}$. Let us denote by  $C_c^\infty(\mathcal{O})$ the space of all compactly supported $C^\infty$ functions on $\mathcal{O}$. Recall that $u_\gamma\in C^\infty(\mathcal{O})$ and $v_\gamma\in C^\infty((0,+\infty)\times\mathcal{O})$ are defined by \REQ{5E7} and \REQ{5E8}, respectively.
\begin{theorem}\label{6T4}
$(i)$ Let $\gamma$ be an $\mathcal{S}^\prime(\partial \mathcal{O})$-valued random variable, and let $u=D\gamma $ be the solution to the elliptic problem $ \REQ{1E1}$. Then $u=u_\gamma$ on $\mathcal{O}$; that is
$$
\left(u,\psi\right)= \int_{\mathcal{O}}u_\gamma(x)\psi(x)\d x
$$
for any  $\psi\in C_c^\infty(\mathcal{O})$.
Moreover, the solution $u$ is $C^\infty$ inside the domain $\mathcal{O}$.
\noindent
$(ii)$ Let $\xi$ be a process with c\`adl\`ag trajectories in $\mathcal{S}^\prime(\partial \mathcal{O})$ and let $u$ be the solution to the parabolic problem $\REQ{1E2}$. Then
$$
\left(u, \psi\right) = \int_\mathcal{O} u_{\xi, u_0}(t,x)\psi(x)\d x, \qquad \psi\in C^\infty_c(\mathcal{O}),
$$
where
\begin{equation}\label{6E9}
u_{\xi,u_0}(t,x)= \int_{\mathcal{O}}G(t,x,y)u_0(y)\d y + \int_0^t \Delta_x v_{\xi(s)}(t-s,x)\d s - v_{\xi(0)}(t,x),
\end{equation}
and in particular,   $u$ is  $C^\infty$ on $(0,+\infty)\times \mathcal{O}$.
\end{theorem}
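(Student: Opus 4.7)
For part (i), the plan is to rewrite $(D\gamma,\psi)$ for $\psi\in C_c^\infty(\mathcal{O})$ in terms of the Green kernel $\mathcal{G}$. Since $\psi$ is compactly supported in $\mathcal{O}$, the resolvent admits the integral representation $(\Delta_\tau-\lambda)^{-1}\psi(y)=-\int_{\mathcal{O}}\mathcal{G}(y,z)\psi(z)\,dz$. Inserting this into the definition of $D$ and using the symmetry $\mathcal{G}(y,z)=\mathcal{G}(z,y)$ yields
$$
(D\gamma,\psi)=\left(\gamma,\int_{\mathcal{O}}\frac{\partial \mathcal{G}}{\partial \mathbf{n}_y}(z,\cdot)\,\psi(z)\,dz\,\bigg|_{\partial \mathcal{O}}\right).
$$
By Corollary \ref{5C2}, for $z$ in the compact support of $\psi$ the family $\{(\partial \mathcal{G}/\partial \mathbf{n}_y)(z,\cdot)|_{\partial \mathcal{O}}\}_z$ is uniformly bounded in every Schwartz seminorm on $\mathcal{S}(\partial \mathcal{O})$, so the integral above is a Bochner integral in $\mathcal{S}(\partial \mathcal{O})$. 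Applying $\gamma$ inside the integral (Fubini) produces $\int_{\mathcal{O}}u_\gamma(z)\,\psi(z)\,dz$, and the smoothness $u_\gamma\in C^\infty(\mathcal{O})$ is exactly Corollary \ref{5C3}.

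For part (ii), I test the mild formula (\ref{4E6}) against $\psi\in C_c^\infty(\mathcal{O})\subset \mathcal{H}$ and match each of its four pieces with a term of (\ref{6E9}). The workhorse identity is
$$
\left((\lambda-\Delta_\tau)D\zeta,\varphi\right)=\left(\zeta,\frac{\partial \varphi}{\partial \mathbf{n}}\bigg|_{\partial \mathcal{O}}\right),\qquad \zeta\in \mathcal{S}^\prime(\partial \mathcal{O}),\ \varphi\in \mathcal{H},
$$
obtained from the definition of $D$ together with $(\Delta_\tau-\lambda)^{-1}(\lambda-\Delta_\tau)\varphi=-\varphi$ on $\mathcal{H}$. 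The first term $(T(t)u_0,\psi)=(u_0,T(t)\psi)$ yields $\int_{\mathcal{O}}\bigl(\int_{\mathcal{O}} G(t,x,y)u_0(y)\,dy\bigr)\psi(x)\,dx$ via the kernel representation of the semigroup. The boundary term $((\lambda-\Delta_\tau)D\xi(t),\psi)$ vanishes by the workhorse identity since $\psi$ is compactly supported in $\mathcal{O}$, hence $\partial \psi/\partial \mathbf{n}|_{\partial \mathcal{O}}=0$. The initial term $-((\lambda-\Delta_\tau)T(t)D\xi(0),\psi)$, used with $\varphi=T(t)\psi\in \mathcal{H}$, becomes $-(\xi(0),\partial (T(t)\psi)/\partial \mathbf{n}|_{\partial \mathcal{O}})$; symmetry of $G$ and Fubini rewrite this as $-\int_{\mathcal{O}}v_{\xi(0)}(t,x)\psi(x)\,dx$. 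The time integral is handled analogously with $\varphi=\Delta_\tau T(t-s)\psi\in \mathcal{H}$: each integrand becomes $(\xi(s),\partial (\Delta T(t-s)\psi)/\partial \mathbf{n}|_{\partial \mathcal{O}})$, and commuting $\Delta_x$ with $\partial_{\mathbf{n}_y}$ (they act on different variables) combined with symmetry of $G$ and Fubini produces $\int_{\mathcal{O}}\Delta_x v_{\xi(s)}(t-s,x)\psi(x)\,dx$. Summing the four pieces reproduces (\ref{6E9}), and the $C^\infty$-regularity of $u_{\xi,u_0}$ on $(0,+\infty)\times \mathcal{O}$ then follows from Theorem \ref{5T3} (derivative bounds on $G$) together with Corollary \ref{5C3} (regularity of $v_{\xi(s)}$ and of the heat extension of $u_0$).

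The main technical difficulty is justifying the Fubini-type interchanges of the generalised random elements $\gamma$ and $\xi(s)$ with Lebesgue integration over $\mathcal{O}$, and with spatial/temporal differentiation. Each such swap requires exhibiting the integrand as a continuous, uniformly Schwartz-bounded map from the parameter $(t,x)$, ranging over a compact $K\subset \mathcal{O}$ and a bounded time interval, into $\mathcal{S}(\partial \mathcal{O})$. The requisite uniform bounds in every Schwartz seminorm are supplied precisely by Theorem \ref{5T3} and Corollary \ref{5C2}, and they rely crucially on $\psi$ being compactly supported, so that all kernels are evaluated at interior points at positive distance from $\partial \mathcal{O}$.
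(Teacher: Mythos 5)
Your proposal is correct and follows essentially the same route as the paper: both parts reduce to the identity $((\lambda-\Delta_\tau)T(t-s)D\xi(s),\psi)=(\xi(s),\frac{\partial}{\partial \mathbf{n}}T(t-s)\psi\vert_{\partial\mathcal{O}})$, the vanishing of the boundary term for compactly supported $\psi$, and an interchange of the distribution with the integral over $\mathcal{O}$ justified by the kernel estimates of Theorem \ref{5T3} and Corollaries \ref{5C2}--\ref{5C3}. The only cosmetic difference is that the paper carries out the interchange via Riemann sums converging in $\mathcal{S}(\mathbb{R}^d)$ (taking care to pass through extensions of $\frac{\partial\mathcal{G}}{\partial\mathbf{n}_y}(x,\cdot)$ to all of $\mathbb{R}^d$), whereas you phrase it as a Bochner integral in $\mathcal{S}(\partial\mathcal{O})$; these are equivalent justifications of the same step.
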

\begin{proof}  The first part of the theorem says that
$$
\left(D\gamma,\psi\right)=(u_\gamma,\psi),\qquad \forall\, \psi \in C_c^\infty(\mathcal{O}).
$$
We have
\begin{align*}
(D\gamma,\psi) &= \left(\gamma, -\frac{\partial}{\partial\mathbf{n}}\left(\Delta_\tau-\lambda\right)^{-1}\psi\bigg \vert_{\partial \mathcal{O}} \right)= \left(\gamma, \frac{\partial}{\partial\mathbf{n}}\int_{\mathcal{O}}\mathcal{G}(x,\cdot)\psi (x)\d x\bigg \vert_{\partial \mathcal{O}} \right)\\
&=  \left(\gamma, \int_{\mathcal{O}}\frac{\partial \mathcal{G}}{\partial\mathbf{n}_y}(x,\cdot)\psi (x)\d x\bigg \vert_{\partial \mathcal{O}} \right).
\end{align*}
Since $\psi$ has a compact support in $\mathcal{O}$, there is a sequences $(x_k^n)$, $k=1,\ldots,n$,  of variables of the support of $\psi$ and reals $(a_k^n)$ such that
$$
\int_{\mathcal{O}}\frac{\partial \mathcal{G}}{\partial\mathbf{n}_y}(x,\cdot)\psi (x)\d x= \lim_{n\to \infty} \sum_{k=1}^n \overline {\frac{\partial \mathcal{G}}{\partial\mathbf{n}_y}(x_k^n,\cdot)}\psi (x_k^n)a_k^n,
$$
where $\overline {\frac{\partial \mathcal{G}}{\partial\mathbf{n}_y}(x_k^n,\cdot)}\in \mathcal{S}(\mathbb{R}^d)$ is an extension of ${\frac{\partial \mathcal{G}}{\partial\mathbf{n}_y}(x_k^n,\cdot)}$, and the convergence is in the topology of $\mathcal{S}(\mathbb{R}^d)$, and
$$
\lim_{n\to \infty}\sum_{k=1}^n u_\gamma(x^n_k)\psi (x_k^n)a_k^n=\int_{\mathcal{O}}u_\gamma(x)\psi(x)\d x.
$$
Then
\begin{align*}
\left(\gamma, \int_{\mathcal{O}}\frac{\partial \mathcal{G}}{\partial\mathbf{n}_y}(x,\cdot)\psi (x)\d x\bigg \vert_{\partial \mathcal{O}} \right)
&= \lim_{n\to \infty} \sum_{k=1}^n \left(\gamma, \overline {\frac{\partial \mathcal{G}}{\partial\mathbf{n}_y}(x_k^n,\cdot)}\bigg \vert_{\partial \mathcal{O}} \right)\psi (x_k^n)a_k^n\\
&=
\lim_{n\to \infty} \sum_{k=1}^nu_\gamma(x^n_k)\psi (x_k^n)a_k^n=\int_{\mathcal{O}}u_\gamma(x)\psi(x)\d x.
\end{align*}
Regularity of $u$ inside the domain now follows from Corollary \ref{5C3}.
\par\noindent
Let $u$ be the solution to $\REQ{1E2}$. To prove the second part of the theorem, note that
\begin{align*}
\left((\lambda-\Delta_\tau)T(t-s)D\xi(s), \psi\right) &= \left(\xi(s),-\frac{\partial }{\partial\mathbf{n}} (\Delta_\tau-\lambda) ^{-1}(\lambda-\Delta_\tau)T(t-s)\psi\bigg \vert_{\partial \mathcal{O}}\right)\\
&= \left(\xi(s),\frac{\partial }{\partial\mathbf{n}} T(t-s)\psi\bigg \vert_{\partial \mathcal{O}}\right).
\end{align*}
Using the arguments from the proof of the first part we obtain
$$
\left((\lambda-\Delta_\tau)T(t-s)D\xi(s), \psi\right)= \int_{\mathcal{O}}v_{\xi(s)}(t-s,x)\psi(x)\d x, \qquad \forall\, \psi\in C^\infty_c(\mathcal{O})
$$
and
$$
\left(\Delta_\tau(\lambda-\Delta_\tau)T(t-s)D\xi(s), \psi\right)= \int_{\mathcal{O}}(\Delta_xv_{\xi(s)}(t-s,x))\psi(x)\d x, \qquad \forall\, \psi\in C^\infty_c(\mathcal{O}).
$$
Let $\psi\in C^\infty_c(\mathcal{O})$. Taking into account $\REQ{4E6}$ we obtain
\begin{align*}
\left(u(t),\psi\right) &= \left(T(t)u_0,\psi\right) + \int_0^t  \int_{\mathcal{O}}(\Delta_xv_{\xi(s)}(t-s,x))\psi(x)\d x \d s \\
&\qquad + \left(\xi(t), \frac{\partial \psi}{\partial \mathbf{n}}\bigg \vert_{\partial \mathcal{O}}\right)-  \int_{\mathcal{O}}v_{\xi(0)}(t,x)\psi(x)\d x.
\end{align*}
Since $\frac{\partial \psi}{\partial \mathbf{n}}\bigg \vert_{\partial \mathcal{O}}\equiv 0$, we obtain \REQ{6E9}.
\end{proof}

\section{Regularity of solutions at the boundary} \label{S7}

Our  aim in this section is to investigate the space regularity of $u$ defined by \REQ{1E1} or \REQ{1E2} at the boundary.  To this end we will evaluate the expectations $\mathbb{E}u^2(x)$ and $\mathbb{E}u^2(t,x)$ where $x$ is near the boundary $\partial \mathcal{O}$ and we will  obtain the bounds of the type
$$
\mathbb{E}u^2(x)\le f({\rm dist}\, (x,\partial \mathcal{O}))\qquad \text{and}\qquad \mathbb{E}u^2(t,x)\le f({\rm dist}\, (x,\partial \mathcal{O})).
$$
In a similar way one can evaluate $u(x)$ and $u(t,x)$ in order to obtain the estimates of the form
$$
u^2(x;\omega)\le c(\omega)f({\rm dist}\, (x,\partial \mathcal{O}))\qquad \text{and}\qquad u^2(t,x;\omega)\le c(\omega)f({\rm dist}\, (x,\partial \mathcal{O})),
$$
 for $\mathbb{P}$ almost all $\omega$.
\subsection{Poisson equation}
Recall, see Theorem \ref{6T4}, that the solution $u$ to \REQ{1E1} inside of $\mathcal{O}$ can be identified with smooth random field
$$
u(x)= \left( \gamma,  \frac{\partial \cG}{\partial \mathbf{n}_y}(x,\cdot)\bigg \vert_{\partial \mathcal{O}}\right), \qquad x\in \mathcal{O}.
$$

Assume now that $\gamma=\sum_k \gamma_ke_k\nu$ is as in Example \ref{2Ex1}. Then
$$
u(x)= \sum_k\gamma_k \int_{\partial \mathcal{O}}\frac{\partial \cG}{\partial \mathbf{n}_y}(x,y)e_k(y)\nu(\d y),\qquad x\in \mathcal{O}.
$$
Therefore, for every $ x\in \mathcal{O}$,
$$
\mathbb{E}u^2(x)= \sum_k \left(\int_{\partial \mathcal{O}}\frac{\partial \cG}{\partial \mathbf{n}_y}(x,y)e_k(y)\nu(\d y)\right)^2=
\int_{\partial \mathcal{O}}\left(\frac{\partial \cG}{\partial \mathbf{n}_y}(x,y)\right)^2\nu(\d y).
$$
Taking into account the estimates of Corollary \ref{5C2} we obtain the following result.
\begin{proposition}
Let $u$ be the solution to \REQ{1E1} with $\gamma$ as in Example \ref{2Ex1}.  Then
$$
\mathbb{E}u^2(x)\le C\times
\begin{cases}
\int_{\partial\mathcal{O}}|x-y|^{2-2d} \nu(\d y)&\text{if $d>1$,}\\
\left[1+\log^+{\rm dist}(x, \partial \mathcal{O})\right]^2&\text{if $d=1$}.
\end{cases}, \qquad x\in \mathcal{O}.
$$
\end{proposition}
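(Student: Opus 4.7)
The plan is to read the proposition as a direct substitution: the identity
$$
\E\,u^2(x) = \int_{\partial\cO} \left(\frac{\partial \cG}{\partial \mathbf{n}_y}(x,y)\right)^2 \nu(\d y)
$$
is already derived in the paragraph preceding the statement, so everything reduces to inserting the pointwise bounds on $\partial\cG/\partial\mathbf{n}_y$ supplied by Corollary~\ref{5C2}. The analytic content sits entirely in those Green-kernel estimates, which are now given.

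First I would briefly justify the identity above, since it relies on an interchange of expectation and an infinite sum. By Theorem~\ref{6T4}(i), inside $\cO$ the solution agrees with the smooth random field $u(x) = \left(\gamma, \frac{\partial \cG}{\partial \mathbf{n}_y}(x,\cdot)\bigr|_{\partial\cO}\right)$, which for $\gamma = \sum_k \gamma_k e_k \nu$ from Example~\ref{2Ex1} expands as
$$
u(x) = \sum_k \gamma_k \int_{\partial\cO} \frac{\partial \cG}{\partial \mathbf{n}_y}(x,y)\, e_k(y)\, \nu(\d y).
$$
Because $(\gamma_k)$ is an orthonormal system in $L^2(\Omega,\mathfrak{F},\P)$ and $(e_k)$ is an orthonormal basis of $L^2(\partial\cO,\nu)$, squaring term by term and applying Parseval converts the series into $\|\partial \cG/\partial \mathbf{n}_y(x,\cdot)\|_{L^2(\partial\cO,\nu)}^{2}$, provided that norm is finite, which is precisely what the proposition will certify \emph{a posteriori}.

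For $d > 1$ I would substitute the first estimate of Corollary~\ref{5C2}, $|\partial \cG/\partial \mathbf{n}_y(x,y)| \le C|x-y|^{1-d}$, square it, and read off
$$
\E\,u^2(x) \le C^2 \int_{\partial\cO} |x-y|^{2-2d}\, \nu(\d y),
$$
which is the first case. For $d = 1$ the boundary $\partial\cO$ is a discrete set of points, so $|x-y| \ge \mathrm{dist}(x,\partial\cO)$ for every $y \in \partial\cO$; the logarithmic bound of Corollary~\ref{5C2} then yields $|\partial\cG/\partial\mathbf{n}_y(x,y)| \le C\bigl[1 + \log^{+} \mathrm{dist}(x,\partial\cO)^{-1}\bigr]$ uniformly in $y$ (I read the statement of the proposition as containing a typographic inversion of the distance), and squaring followed by integrating against $\nu$ over a discrete set gives the second bound up to a $\nu$-dependent constant absorbed into $C$.

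I do not expect any substantive obstacle. The two points deserving a comment are exactly those handled above: the Parseval step, which is automatic once one notices that the right-hand side is finite, and the one-dimensional reduction, which is a triviality since $\partial\cO$ is zero-dimensional. All the genuine analysis — namely the singularity of $\partial\cG/\partial\mathbf{n}_y$ at the boundary — has been discharged in Corollary~\ref{5C2}.
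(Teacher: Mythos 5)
Your proof is correct and follows essentially the same route as the paper: the series expansion of $u(x)$, the isometry $\mathbb{E}u^2(x)=\int_{\partial\mathcal{O}}\bigl(\frac{\partial \mathcal{G}}{\partial \mathbf{n}_y}(x,y)\bigr)^2\nu(\d y)$ via independence of the $\gamma_k$ and Parseval, and then the pointwise kernel bounds of Corollary \ref{5C2}. The only additions are your (correct) justification of the interchange and your reading of the $d=1$ case, both of which the paper leaves implicit.
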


Assume now that $\gamma = \sum_k \gamma_k\nu_k$ is as in Example \ref{2Ex3}. Then, for every $ x\in \mathcal{O}$,
$$
\mathbb{E}u^2(x) = \sum_k \left(\int_{\partial \mathcal{O}}\frac{\partial \cG}{\partial \mathbf{n}_y}(x,y)\nu_k(\d y)\right)^2\le \sum_k\|\nu_k\|^2_{\rm Var} \sup_{y\in \partial \mathcal{O}} \left(\frac{\partial \cG}{\partial \mathbf{n}_y}(x,y)\right)^2,
$$
and consequently we have the following.
\begin{proposition}
Let $u$ be the solution to \REQ{1E1} with $\gamma$ as in Example \ref{2Ex3}. Then
$$
\mathbb{E}u^2(x)\le  C\times
\begin{cases}
 {\rm dist}(x,\partial \mathcal{O})^{2-2d}&\text{if $d>1$,}\\
 \left[1+\log^+{\rm dist}(x, \partial \mathcal{O})\right]^2&\text{if $d=1$},
\end{cases},
\qquad x\in \mathcal{O}.
$$
\end{proposition}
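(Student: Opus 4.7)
The plan is to combine the explicit representation of $u$ from Theorem \ref{6T4}(i) with the specific structure of $\gamma$ in Example \ref{2Ex3} and the kernel bounds from Corollary \ref{5C2}. By Theorem \ref{6T4}(i), the solution coincides on $\mathcal{O}$ with the smooth random field
$$
u(x)=\left(\gamma,\frac{\partial\cG}{\partial\mathbf{n}_y}(x,\cdot)\bigg\vert_{\partial\mathcal{O}}\right),\qquad x\in\mathcal{O}.
$$
Substituting $\gamma=\sum_{k}\gamma_k\nu_k$ and using that $y\mapsto\partial\cG/\partial\mathbf{n}_y(x,y)$ is smooth and bounded on $\partial\mathcal{O}$ for each fixed interior $x$, one obtains
$$
u(x)=\sum_{k}\gamma_k\int_{\partial\mathcal{O}}\frac{\partial\cG}{\partial\mathbf{n}_y}(x,y)\,\nu_k(\d y),
$$
with the series converging in $L^2(\Omega)$ thanks to $\sum_{k}\|\nu_k\|_{\rm Var}^2<\infty$ together with the total-variation bound $\vert\int\!\phi\,\d\nu_k\vert\le\|\phi\|_\infty\|\nu_k\|_{\rm Var}$ applied to $\phi=\partial\cG/\partial\mathbf{n}_y(x,\cdot)$.

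Next I would compute $\mathbb{E}u^2(x)$. Independence and $\mathbb{E}\gamma_k^2=1$ give
$$
\mathbb{E}\,u^2(x)=\sum_{k}\left(\int_{\partial\mathcal{O}}\frac{\partial\cG}{\partial\mathbf{n}_y}(x,y)\,\nu_k(\d y)\right)^{2}.
$$
Dominating each integral in absolute value by $\|\nu_k\|_{\rm Var}\cdot\sup_{y\in\partial\mathcal{O}}\vert\partial\cG/\partial\mathbf{n}_y(x,y)\vert$ and pulling the $x$-dependent supremum out of the sum yields
$$
\mathbb{E}\,u^2(x)\le\Bigl(\sum_{k}\|\nu_k\|_{\rm Var}^2\Bigr)\sup_{y\in\partial\mathcal{O}}\left(\frac{\partial\cG}{\partial\mathbf{n}_y}(x,y)\right)^{2}.
$$

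To finish, I would invoke Corollary \ref{5C2}. For $d>1$ the estimate $\vert\partial\cG/\partial\mathbf{n}_y(x,y)\vert\le C\vert x-y\vert^{1-d}$ is monotone decreasing in $\vert x-y\vert$, so the supremum over $y\in\partial\mathcal{O}$ equals $C\,\mathrm{dist}(x,\partial\mathcal{O})^{1-d}$, whose square is precisely the claimed $\mathrm{dist}(x,\partial\mathcal{O})^{2-2d}$. For $d=1$ the bound $C[1+\log^+\vert x-y\vert^{-1}]$ is likewise maximised at the closest boundary point, producing $C[1+\log^+\mathrm{dist}(x,\partial\mathcal{O})^{-1}]$, whose square supplies the logarithmic term in the statement. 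Absorbing the finite constant $\sum_{k}\|\nu_k\|_{\rm Var}^2$ into the overall $C$ completes the argument. There is no substantive obstacle: the heavy lifting was done in Theorem \ref{6T4} and Corollary \ref{5C2}, and the only routine checks are the $L^2(\Omega)$-convergence of the series defining $u(x)$ and the legitimacy of interchanging sum, expectation and integration in the moment calculation, both of which follow at once from independence combined with the uniform boundedness of the kernel on $\partial\mathcal{O}$ at a fixed interior $x$.
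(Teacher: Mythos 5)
Your proposal is correct and follows essentially the same route as the paper: expand $u(x)=\sum_k\gamma_k\int_{\partial\mathcal{O}}\frac{\partial\mathcal{G}}{\partial\mathbf{n}_y}(x,y)\,\nu_k(\d y)$, use independence to get $\mathbb{E}u^2(x)=\sum_k\bigl(\int_{\partial\mathcal{O}}\frac{\partial\mathcal{G}}{\partial\mathbf{n}_y}(x,y)\,\nu_k(\d y)\bigr)^2\le\sum_k\|\nu_k\|_{\rm Var}^2\,\sup_{y\in\partial\mathcal{O}}\bigl(\frac{\partial\mathcal{G}}{\partial\mathbf{n}_y}(x,y)\bigr)^2$, and conclude with the kernel bounds of Corollary \ref{5C2}. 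The extra remarks on $L^2(\Omega)$-convergence and the interchange of sum and expectation are routine justifications the paper leaves implicit.
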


Assume that $\mathcal{O}= (0,+\infty)\times \mathbb{R}^m$. An important case can be obtained if $\gamma=\mathcal{W}(1)$ where $\mathcal{W}$ is a homogeneous Wiener process, see Example \ref{2Ex4}. Then $\gamma =\sum_k W_k(1)\mathcal{F}(e_k\nu)$. Obviously
$$
u(x_0,x_1,\ldots,x_m)=\sum_k W_k(1) \int_{\mathbb{R}^m} \frac{\partial \cG}{\partial \mathbf{n}_y}((x_0,\ldots x_m, 0, y_1,\ldots y_m)\mathcal{F}(e_k\nu)(y)\d y.
$$
Consequently, see Example \ref{5Ex1}, for every $ x\in \mathcal{O}$,
\begin{align*}
\mathbb{E}u^2(x)&= \int_{\mathbb{R}^m} \left\vert \mathcal{F}_y\frac{\partial \cG}{\partial \mathbf{n}_y}((x_0,\ldots x_m, 0, y_1,\ldots y_m)\right\vert ^2 \nu(\d y)\\
&=
\int_{\mathbb{R}^m} \left\vert \int_0^{+\infty}\e^{-\lambda t}\frac{x_0}{2\sqrt{\pi}t^{3/2}}\e^{-\frac{x_0^2}{4t}}\e^{\i \langle x,y\rangle - \frac{(2t)^m}{2}|y|^2}\d t\right\vert ^2 \nu(\d y).
\end{align*}
Using now the crude estimate \REQ{5E9} we obtain:
\begin{proposition}
Let $u$ be the solution to \REQ{1E1} with $\gamma$ as in Example \ref{2Ex4}. Then
$$
\mathbb{E}u^2(x)\le C\int_{\mathbb{R}^m} \left( x_0^{-2}\chi_{\{|y|\le 1\}} + x_0^{-4} |y|^{-1/m}\chi_{\{|y|> 1\}}\right)
\nu(\d y),\qquad x\in \mathcal{O}.
$$
\end{proposition}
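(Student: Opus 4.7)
My plan is to reduce the expectation $\mathbb{E}u^2(x)$ to a single integral over $\mathbb{R}^m$ by exploiting the orthonormality of the $W_k(1)$'s, then transfer the sum over $k$ into a Parseval-type identity in $L^2(\mathbb{R}^m,\mathcal{B}(\mathbb{R}^m),\nu)$, and finally insert the pointwise Fourier bound \REQ{5E9}.

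First I would fix $x=(x_0,x_1,\ldots,x_m)\in\mathcal{O}$ and apply Theorem \ref{6T4}$(i)$ to identify the solution with $u_\gamma$, so that
\[ u(x)=\left(\gamma,\tfrac{\partial \mathcal{G}}{\partial\mathbf{n}_y}(x,\cdot)\big|_{\partial\mathcal{O}}\right). \]
Using the series representation of the spatially homogeneous Wiener process from Example \ref{2Ex4} (with $t=1$), I would substitute $\gamma=\sum_k W_k(1)\mathcal{F}(e_k\nu)$, interchange the distributional pairing with the $L^2(\Omega)$-convergent sum, and obtain precisely the expression already displayed in the paragraph preceding the statement, namely
\[ u(x)=\sum_k W_k(1)\int_{\mathbb{R}^m} \tfrac{\partial \mathcal{G}}{\partial\mathbf{n}_y}\bigl(x,(0,y)\bigr)\,\mathcal{F}(e_k\nu)(y)\,\d y. \]

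Next, since the $W_k(1)$'s are i.i.d.\ $\mathcal{N}(0,1)$, I would use orthogonality to write
\[ \mathbb{E}u^2(x)=\sum_k \left|\int_{\mathbb{R}^m}\tfrac{\partial \mathcal{G}}{\partial\mathbf{n}_y}\bigl(x,(0,y)\bigr)\,\mathcal{F}(e_k\nu)(y)\,\d y\right|^2. \]
The crucial step is to move the Fourier transform from $e_k\nu$ onto $\tfrac{\partial \mathcal{G}}{\partial\mathbf{n}_y}(x,(0,\cdot))$. For each fixed $x\in\mathcal{O}$ this kernel belongs to $\mathcal{S}(\mathbb{R}^m)$ as a function of the tangential variable (this follows from Corollary \ref{5C2} together with the smoothness and rapid decay inherited from Theorem \ref{5T3}), so the standard identity $\langle\mathcal{F}(e_k\nu),\varphi\rangle=\langle e_k,\mathcal{F}_y\varphi\rangle_{L^2(\nu)}$ applies. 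Then the $k$-th term is $|\langle \mathcal{F}_y\tfrac{\partial\mathcal{G}}{\partial\mathbf{n}_y}(x,(0,\cdot)),e_k\rangle_{L^2(\nu)}|^2$. Summing over $k$ and invoking Parseval in $L^2_{(\mathrm{s})}(\mathbb{R}^m,\mathcal{B}(\mathbb{R}^m),\mu)$ (noting that $\mathcal{F}_y\tfrac{\partial\mathcal{G}}{\partial\mathbf{n}_y}(x,(0,\cdot))$ lies in this space because the kernel is real and even-ready in the relevant sense, or after symmetrisation) collapses the sum to
\[ \mathbb{E}u^2(x)=\int_{\mathbb{R}^m}\left|\mathcal{F}_y\tfrac{\partial\mathcal{G}}{\partial\mathbf{n}_y}\bigl(x,(0,y)\bigr)\right|^2\nu(\d y). \]

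Finally, I would plug in the pointwise estimate \REQ{5E9} from Example \ref{5Ex1}, which bounds the integrand by $c\bigl(x_0^{-2}\chi_{\{|y|\le 1\}}+x_0^{-4}|y|^{-1/m}\chi_{\{|y|>1\}}\bigr)$, and absorb the constant $c$ into $C$ to conclude the claimed inequality. The step I expect to require the most care is the Parseval/Plancherel identification: one must justify that $(e_k)$, chosen as an orthonormal basis of $L^2_{(\mathrm{s})}(\mathbb{R}^m,\mu)$, still produces the Parseval equality when tested against the (possibly complex) kernel $\mathcal{F}_y\tfrac{\partial\mathcal{G}}{\partial\mathbf{n}_y}(x,(0,\cdot))$, and that all interchanges of summation and integration are legitimate; however the rapid Gaussian decay of the kernel visible in Example \ref{5Ex1} makes everything absolutely convergent, so no technical obstruction should remain.
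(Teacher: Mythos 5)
Your proposal is correct and follows essentially the same route as the paper: expand $\gamma=\sum_k W_k(1)\mathcal{F}(e_k\nu)$, use independence of the $W_k(1)$ to reduce $\mathbb{E}u^2(x)$ to a sum of squared coefficients, identify that sum via Parseval with $\int_{\mathbb{R}^m}\bigl\vert\mathcal{F}_y\frac{\partial\mathcal{G}}{\partial\mathbf{n}_y}(x,(0,y))\bigr\vert^2\nu(\d y)$, and insert the bound \REQ{5E9}. The only difference is that you spell out the Fourier-duality and Plancherel justification that the paper compresses into a single ``consequently''.
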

\begin{remark}
{\rm Note that for an arbitrary (tempered) spectral measure $\nu$, $u$ is a Gaussian field in $\mathcal{O}$ and consequently $\mathbb{E}u^2(x)<+\infty$ even if the integral appearing on the right hand side of the estimate in the proposition above is infinite.
}
\end{remark}
Let us now consider the case of L\'evy measure on the boundary. Namely, see Example \ref{2Ex5},  assume that $\gamma=\rho \pi$, where $\rho$ is a function and $\pi$ is a Poisson random measure on $\partial \mathcal{O}$ with intensity measure $\nu$. Then
$$
u(x) = \int_{\partial \mathcal{O}}\frac{\partial \cG}{\partial \mathbf{n}_y}(x,y)\rho(y) \pi(\d y)\qquad x\in \mathcal{O}.
$$
Taking into account, see e.g. \cite{Peszat-Zabczyk}, the following estimate valid for any measurable $f\colon E\mapsto \mathbb{R}$ and and Poisson random measure $\pi$ on $E$ with intensity measure $\nu$,
$$
\mathbb{E}\left(\int_E f(x)\pi(\d x)\right)^2 \le 2\left[\int_Ef^2(x)\nu(\d x)+ \left(\int_{E}f(x)\nu(\d x)\right)^2\right], \qquad x\in \mathcal{O},
$$
and taking into account the estimates of Corollary \ref{5C2} we obtain the following result.
\begin{proposition}
Let $u$ be the solution to \REQ{1E1} with $\gamma$ as in Example \ref{2Ex5}.  Then, for $ x\in \mathcal{O}$
$$
\mathbb{E}u^2(x)\le C\times
\begin{cases}
\int_{\partial\mathcal{O}}|x-y|^{2-2d} \rho(y)^2\nu(\d y)+ \left(\int_{\partial\mathcal{O}}|x-y|^{1-d} |\rho(y)|\nu(\d y)\right)^2&\text{if $d>1$,}\\
\left[1+\log^+{\rm dist}(x, \partial \mathcal{O})\right]^2&\text{if $d=1$}.
\end{cases}
$$
\end{proposition}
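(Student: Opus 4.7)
The plan is to reduce the statement to a direct computation using three ingredients already at our disposal: the representation of $u$ as a smooth field inside $\mathcal{O}$ from Theorem \ref{6T4}, the second-moment estimate for Poisson integrals displayed just before the proposition, and the pointwise bounds on $\partial \mathcal{G}/\partial \mathbf{n}_y$ from Corollary \ref{5C2}.

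First, I would invoke Theorem \ref{6T4}(i) to identify the weak solution $u=D\gamma$ inside $\mathcal{O}$ with the smooth function
\[
u(x)=\left(\gamma,\frac{\partial\mathcal{G}}{\partial\mathbf{n}_y}(x,\cdot)\bigg|_{\partial\mathcal{O}}\right),\qquad x\in\mathcal{O}.
\]
Specialising to $\gamma=\rho\pi$ from Example \ref{2Ex5}, and using that for fixed $x\in\mathcal{O}$ the map $y\mapsto \frac{\partial\mathcal{G}}{\partial\mathbf{n}_y}(x,y)\rho(y)$ is bounded and measurable on $\partial\mathcal{O}$ (by Corollary \ref{5C2} together with polynomial growth of $\rho$), I can represent
\[
u(x)=\int_{\partial\mathcal{O}}\frac{\partial\mathcal{G}}{\partial\mathbf{n}_y}(x,y)\,\rho(y)\,\pi(\d y).
\]

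Next, I would apply the Poisson-integral second-moment inequality quoted immediately before the proposition, taking $E=\partial\mathcal{O}$ and $f(y)=\frac{\partial\mathcal{G}}{\partial\mathbf{n}_y}(x,y)\rho(y)$. This yields
\[
\mathbb{E}u^2(x)\le 2\int_{\partial\mathcal{O}}\left(\frac{\partial\mathcal{G}}{\partial\mathbf{n}_y}(x,y)\right)^{2}\rho(y)^{2}\,\nu(\d y)+2\left(\int_{\partial\mathcal{O}}\left|\frac{\partial\mathcal{G}}{\partial\mathbf{n}_y}(x,y)\right||\rho(y)|\,\nu(\d y)\right)^{2}.
\]
Then I would plug in the kernel estimates of Corollary \ref{5C2}. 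In the case $d>1$ this produces exactly the two summands appearing on the right-hand side of the proposition, up to the constant $C$. For $d=1$, the boundary $\partial\mathcal{O}$ is a finite (or empty) set of points, so both the $L^2$ and $L^1$ integrals collapse into finite sums and are controlled by $\max_{y\in\partial\mathcal{O}}(1+\log^{+}|x-y|^{-1})^{2}\le C[1+\log^{+}\mathrm{dist}(x,\partial\mathcal{O})]^{2}$, absorbing the $\rho$ and $\nu$ factors into $C$ (as the statement does).

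No single step is genuinely hard; the only mild subtlety is justifying the passage from the distributional pairing $(\gamma,\partial\mathcal{G}/\partial\mathbf{n}_y(x,\cdot))$ to the integral representation against $\pi$. This is legitimate because, for fixed $x\in\mathcal{O}$, Corollary \ref{5C2} gives $\frac{\partial\mathcal{G}}{\partial\mathbf{n}_y}(x,\cdot)\in\mathcal{S}(\overline{\mathcal{O}\setminus B(x,\delta)})$ for small $\delta$, which when restricted to $\partial\mathcal{O}$ lies in $\mathcal{S}(\partial\mathcal{O})$; thus the action of $\gamma=\rho\pi$ on it can be unambiguously rewritten as the stated Lebesgue--Stieltjes integral against $\pi$, after which everything reduces to classical calculations for Poisson stochastic integrals.
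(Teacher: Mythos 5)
Your proposal is correct and follows essentially the same route as the paper: identify $u(x)$ with the Poisson integral of $\frac{\partial\mathcal{G}}{\partial\mathbf{n}_y}(x,\cdot)\rho$ against $\pi$, apply the quoted second-moment inequality for Poisson random measures, and insert the kernel bounds of Corollary \ref{5C2}. The only difference is that you spell out the justification of the distributional-pairing-to-integral step and the $d=1$ case, which the paper leaves implicit.
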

\begin{remark}
{\rm In the L\'evy case with a bounded $\rho$ and finite $\nu$, it is easy to obtain pointwise estimate
$$
|u(x;\omega)| \le C(\omega)\times
\begin{cases}
\left({\rm dist}\, x,\partial \mathcal{O}\right)^{1-d} &\text{if $d>1$,}\\
\left[1+\log^+{\rm dist}(x, \partial \mathcal{O})\right]&\text{if $d=1$},
\end{cases}
\qquad x\in \mathcal{O}.
$$
where the random variable $C$ has all moments finite. Using the fact that for $p\in [1,2)$
$$
\mathbb{E}\left\vert \int_Ef(y)\pi(\d y)\right\vert ^p\le C_p \int_{E}|f(y)|^p\nu(\d y)
$$
one can show that
$$
\mathbb{E}|u(x;\omega)|^p \le C\times
\begin{cases}
\int_{\partial\mathcal{O}}|x-y|^{p-pd} |\rho(y)|^p\nu(\d y)&\text{if $d>1$,}\\
\left[1+\log^+{\rm dist}(x, \partial \mathcal{O})\right]^p&\text{if $d=1$}
\end{cases}\qquad x\in \mathcal{O}.
$$
For more details see \cite{Peszat-Zabczyk}.
}
\end{remark}
\subsection{Parabolic case} Let us now examine the case of the heat problem. Without any loss of generality we may assume that $u_0=0$ and $\xi(0)=0$. Then, see Theorem \ref{6T4},
$$
u(t,x) =u_{\xi,0}(t,x) = \int_0^t \Delta_x v_{\xi(s)}(t-s,x)\d s= \int_0^t \left(\xi(s), \Delta_x\frac{\partial G}{\partial \mathbf{n}_y}(t-s,x,\cdot)\right)\d s.
$$
Assume that $\xi (t)=\sum_kW_k^H(t)e_k\nu$ is as in Example \ref{2Ex2}. Then
\begin{align*}
u(t,x) &= \int_0^t \left(\d \xi(s), \frac{\partial G}{\partial \mathbf{n}_y}(t-s,x,\cdot)\right)
= \sum_k \int_0^t \left(e_k\nu, \frac{\partial G}{\partial \mathbf{n}_y}(t-s,x,\cdot)\right)\d W_k^H(s),
\end{align*}
where the integral is in the It\^o sense. Therefore, for every $ x\in \mathcal{O}$,
$$
\mathbb{E}\, |u(t,x)|^2 =\sum_k \mathbb{E}\left\vert \int_0^t \left(\int_{\partial \mathcal{O}}\frac{\partial G}{\partial \mathbf{n}_y}(t-s,x,y)e_k(y)\nu(\d y)\right)\d W_k^H(s)\right\vert^2.
$$
Now if $H=1/2$, then
$$
\mathbb{E}\, |u(t,x)|^2 = \int_0^t \int_{\partial \mathcal{O}}\left(\frac{\partial G}{\partial \mathbf{n}_y}(s,x,y)\right)^2\nu(\d y)\d s.
$$
Using now Theorem \ref{5T3}, we obtain
$$
\mathbb{E}\, |u(t,x)|^2 \le K_1 \int_0^t s^{-d-1} \e^{-\frac{|x-y|^2}{2K_2s}}\nu(\d y)\d s.
$$
Since
\begin{equation}\label{7E1}
\int_0^t s^{-d-1} \e^{-\frac{|x-y|^2}{2K_2s}}\d s\le C|x-y|^{-2d},
\end{equation}
we have the following result.
\begin{proposition}
Assume that $\xi=\sum_k W_ke_k\nu $ is as in Example \ref{2Ex2} with $H=1/2$. Then
$$
\mathbb{E}\, |u(t,x)|^2 \le C\int_{\partial \mathcal{O}}|x-y|^{-2d}\nu(\d y).
$$
\end{proposition}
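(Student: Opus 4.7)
Most of the work has in fact been carried out in the lines preceding the statement. Starting from the series representation
\[
u(t,x)= \sum_k \int_0^t \Bigl(\int_{\partial \mathcal{O}}\tfrac{\partial G}{\partial \mathbf{n}_y}(t-s,x,y)\,e_k(y)\,\nu(\d y)\Bigr)\,\d W_k(s),
\]
Itô's isometry (legitimate because $H=1/2$, so the $W_k$ are independent standard Brownian motions) together with Parseval's identity in $L^2(\partial\mathcal{O},\mathcal{B}(\partial\mathcal{O}),\nu)$ gives, after the time change $s\mapsto t-s$, the identity
\[
\mathbb{E}|u(t,x)|^2 \;=\; \int_0^t \int_{\partial \mathcal{O}}\Bigl(\tfrac{\partial G}{\partial \mathbf{n}_y}(s,x,y)\Bigr)^2 \nu(\d y)\,\d s
\]
already displayed. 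Theorem~\ref{5T3}, applied with $n=0$ and $|\alpha|=1$ on the fixed time window $s\in(0,t]$, then provides the pointwise Gaussian majorisation used to obtain
\[
\mathbb{E}|u(t,x)|^2 \;\le\; K_1 \int_0^t \int_{\partial \mathcal{O}} s^{-d-1}\, e^{-|x-y|^2/(2K_2\,s)}\,\nu(\d y)\,\d s,
\]
with possibly renamed constants $K_1,K_2$.

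My plan is then threefold: first, swap the order of integration by Fubini (valid since the integrand is nonnegative); second, for each fixed $y\in\partial\mathcal{O}$ verify the deterministic estimate \eqref{7E1}; third, integrate the resulting bound $|x-y|^{-2d}$ against $\nu$ to obtain the announced inequality.

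The only routine computation is step two, and it is a one-variable substitution. Set $r=|x-y|^2/(2K_2\,s)$, so that $s = |x-y|^2/(2K_2 r)$ and $\d s = -|x-y|^2/(2K_2 r^2)\,\d r$. A direct calculation turns
\[
\int_0^t s^{-d-1}\,e^{-|x-y|^2/(2K_2 s)}\,\d s
\]
into $(2K_2)^{d}\,|x-y|^{-2d}\int_{|x-y|^2/(2K_2 t)}^{+\infty} r^{d-1}\,e^{-r}\,\d r$, which is bounded above by $(2K_2)^{d}\,\Gamma(d)\,|x-y|^{-2d}$, establishing \eqref{7E1} with $C$ independent of $y$ and $t$. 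Substituting this pointwise bound into the Fubini inequality and absorbing constants into $C$ yields precisely the stated estimate.

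There is no genuine obstacle; the only point requiring a word of care is that the Itô isometry really does apply, i.e.\ that the integrand $y\mapsto \partial G/\partial \mathbf{n}_y(s,x,y)$ is square-integrable against $\nu$ for $s\in(0,t]$ and $x\in\mathcal{O}$. This is guaranteed by the Gaussian tail bound of Theorem~\ref{5T3}, which controls the kernel uniformly once $x$ is separated from $\partial\mathcal{O}$, while the singularity as $s\downarrow 0$ is absorbed by the very estimate \eqref{7E1} whose finiteness is exactly the statement of the proposition.
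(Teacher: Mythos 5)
Your proposal is correct and follows essentially the same route as the paper: It\^o isometry plus Parseval to reduce $\mathbb{E}|u(t,x)|^2$ to $\int_0^t\int_{\partial\mathcal{O}}\bigl(\tfrac{\partial G}{\partial \mathbf{n}_y}(s,x,y)\bigr)^2\nu(\d y)\,\d s$, then the Gaussian kernel bound of Theorem~\ref{5T3} and the elementary estimate \eqref{7E1}. The only difference is that you actually verify \eqref{7E1} by the substitution $r=|x-y|^2/(2K_2 s)$, which the paper states without proof; your computation is correct.
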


Assume now that $\xi=\sum_k W_k^H\nu_k$ is as in Example \ref{2Ex3}. Then
$$
\mathbb{E}\, |u(t,x)|^2 =\sum_k \mathbb{E}\left\vert \int_0^t \left(\int_{\partial \mathcal{O}}\frac{\partial G}{\partial \mathbf{n}_y}(t-s,x,y)\nu_k(\d y)\right)\d W_k^H(s)\right\vert^2.
$$
Now if $H=1/2$, then
\begin{align*}
\mathbb{E}\, |u(t,x)|^2 &= \sum_k \int_0^t \left(\int_{\partial \mathcal{O}}\frac{\partial G}{\partial \mathbf{n}_y}(s,x,y)\nu_k(\d y)\right)^2\d s\\
&\le \int_0^t \sup_{y\in \partial \mathcal{O}}\left\vert \frac{\partial G}{\partial \mathbf{n}_y}(s,x,y)\right\vert^2\sum_k \|\nu_k\|^2_{\rm Var}.
\end{align*}
Therefore the following result holds.
\begin{proposition}
Assume that $\xi=\sum_k W_ke_k\nu $ is as in Example \ref{2Ex3} with $H=1/2$. Then
$$
\mathbb{E}\, |u(t,x)|^2 \le C{\rm dist}\, (x,\partial \mathcal{O})^{-2d}.
$$
\end{proposition}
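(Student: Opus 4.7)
The proof starts from the inequality
$$
\mathbb{E}|u(t,x)|^2 \le \Bigl(\sum_k \|\nu_k\|^2_{\rm Var}\Bigr)\int_0^t \sup_{y\in\partial\mathcal{O}}\left\vert\frac{\partial G}{\partial \mathbf{n}_y}(s,x,y)\right\vert^2 ds,
$$
already established in the lines immediately preceding the statement, together with the summability condition $\sum_k\|\nu_k\|_{\rm Var}^2<\infty$ built into Example \ref{2Ex3}. What remains is therefore a purely deterministic upper bound on the supremum integral on the right.

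For this I would first invoke Theorem \ref{5T3} with $n=0$ and a length-one multi-index $\alpha$ corresponding to a normal derivative in $y$. This gives the Gaussian-type pointwise bound
$$
\left\vert\frac{\partial G}{\partial \mathbf{n}_y}(s,x,y)\right\vert^2 \le K_1^2\, s^{-(d+1)}\,\exp\!\left(-\frac{2|x-y|^2}{K_2 s}\right).
$$
Since $r\mapsto \exp(-cr^2/s)$ is monotone, the supremum in $y\in\partial\mathcal{O}$ replaces $|x-y|$ by ${\rm dist}(x,\partial\mathcal{O})$; this is the only place where the geometry of the boundary is used. Integrating in $s$ and applying the already-used estimate \REQ{7E1} with $|x-y|$ replaced by ${\rm dist}(x,\partial\mathcal{O})$ yields
$$
\int_0^t s^{-(d+1)}\exp\!\left(-\frac{2\,{\rm dist}(x,\partial\mathcal{O})^2}{K_2 s}\right)ds \le C\,{\rm dist}(x,\partial\mathcal{O})^{-2d}.
$$
Combining the three displays and absorbing every constant (including $\sum_k\|\nu_k\|_{\rm Var}^2$) into $C$ completes the argument.

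There is no substantive obstacle: the hard work has already been done in Theorem \ref{5T3} and in the elementary integral estimate \REQ{7E1}, while the It\^o isometry (available because $H=1/2$) reduces the stochastic question to a deterministic one. The only mild point worth flagging is the order of operations: the supremum in $y$ must be taken \emph{before} integrating in $s$, since otherwise the integrand would still depend on $y$ through the boundary position and could not be cleanly decoupled from $\sum_k\|\nu_k\|_{\rm Var}^2$. Taken in the correct order, the resulting expression depends on $x$ only through ${\rm dist}(x,\partial\mathcal{O})$, which is precisely the bound claimed.
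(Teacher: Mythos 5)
Your proposal is correct and follows essentially the same route as the paper: It\^o isometry plus the total-variation bound $\sum_k\|\nu_k\|_{\rm Var}^2<\infty$ reduce everything to $\int_0^t\sup_{y\in\partial\mathcal{O}}|\partial G/\partial\mathbf{n}_y(s,x,y)|^2\,\d s$, which is then controlled by the Gaussian bound of Theorem \ref{5T3} and the integral estimate \REQ{7E1} with $|x-y|$ replaced by ${\rm dist}(x,\partial\mathcal{O})$. The paper leaves the last two steps implicit (they were carried out for the preceding proposition), so your write-up merely makes explicit what the authors take for granted.
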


Assume now that $\mathcal{O}=(0,+\infty)\times \mathbb{R}^m$ and that $\xi=\mathcal{W}$ is a spatially homogeneous Wiener process on $\mathbb{R}^m$ with the spectral measure $\nu$. Then $\xi=\sum_kW_k\mathcal{F}(e_k\nu)$. Hence, see Example \ref{5Ex1},
\begin{align*}
\mathbb{E}\, |u(t,x)|^2 &= \sum_k \int_0^t \left(\int_{\mathbb{R}^m}\frac{\partial G}{\partial \mathbf{n}_y}(s,x,0,y)\mathcal{F}(e_k\nu)(y)\d y\right)^2\d s\\
&= \int_0^t \int_{\mathbb{R}^m} \left\vert \mathcal{F}_y \frac{\partial G}{\partial \mathbf{n}_y}(s,x,0,y)\right\vert ^2 \nu(\d y)\d s\\
&= \int_0^t \frac{x^2_0}{4\pi s^3} \e^{-\frac{x_0^2}{2s}}\int_{\mathbb{R}^m}\e^{-(2s)^m |y|^2}\nu(\d y)\d s.
 \end{align*}
Therefore, using \REQ{5E10} we obtain:
\begin{proposition}
Assume that $\xi$ is a spatially homogeneous Wiener process as  in Example \ref{2Ex4} with the spectral measure $\nu$. Then
$$
\mathbb{E}\, |u(t,x)|^2 \le C\e^t \int_{\mathbb{R}^m} \left( x_0^{-2}\chi_{\{|y|\le 1\}} + x_0^{-4} |y|^{-1/m}\chi_{\{|y|> 1\}}\right) \nu(\d x).
$$
\end{proposition}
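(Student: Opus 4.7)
The plan is to start from the identity that is already displayed just before the proposition, namely
\[
\mathbb{E}\,|u(t,x)|^2 = \int_0^t\int_{\mathbb{R}^m}\left|\mathcal{F}_y\frac{\partial G}{\partial \mathbf{n}_y}(s,x,0,y)\right|^2\nu(\d y)\,\d s,
\]
which in turn comes from the mild representation of Theorem \ref{4T2}, the regularity identification of Theorem \ref{6T4}, and the decomposition $\mathcal{W}=\sum_k W_k\mathcal{F}(e_k\nu)$ given in Example \ref{2Ex4} together with the It\^o isometry applied to each independent real Wiener component $W_k$. Once this representation is in place, the proof is essentially an application of a pointwise-in-$y$ kernel estimate.

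The first step is to invoke Tonelli's theorem to swap the time integral $\int_0^t\,\d s$ with the spatial spectral integral $\int_{\mathbb{R}^m}\,\nu(\d y)$; this is legitimate because the integrand is nonnegative. Then I would apply the crude bound \REQ{5E10} from Example \ref{5Ex1},
\[
\int_0^t\left|\mathcal{F}_y\frac{\partial G}{\partial \mathbf{n}_y}(s,x,y)\right|^2\d s \le c\left(x_0^{-2}\chi_{\{|y|\le 1\}}+x_0^{-4}|y|^{-1/m}\chi_{\{|y|>1\}}\right)\e^{t},
\]
inside the $\nu$-integral. Since the right-hand side depends on $y$ only through $|y|$, substituting it pointwise and integrating against $\nu$ yields the claimed inequality with $C$ equal to the constant $c$ in \REQ{5E10}. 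Finally the factor $\e^{t}$ comes out of the $\nu$-integral, giving the $C\e^{t}$ prefactor in the statement.

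The only genuinely substantive step is the verification that the bound \REQ{5E10} really governs $\mathcal{F}_y(\partial G/\partial\mathbf{n}_y)(s,x,0,y)$ for the half-space heat kernel in the present geometry, but this has already been carried out in Example \ref{5Ex1} via the explicit reflection formula $G(t,x,y)=\Gamma(t,x-y)-\Gamma(t,\overline{x}-y)$ and the explicit Fourier computation there. The main technical obstacle, if any, is ensuring the pointwise Fourier estimate used to derive \REQ{5E10} is uniform in $s\in[0,t]$ with the correct $\e^{t}$ dependence; this is built in through the treatment of the $|y|\le 1$ and $|y|>1$ regimes separately in Example \ref{5Ex1}, so no further work is required here. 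The rest of the argument is just Fubini and substitution, so this proposition should follow directly from assembling the ingredients already proved in Sections \ref{S4}--\ref{S6} and Example \ref{5Ex1}.
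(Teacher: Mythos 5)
Your proposal is correct and follows essentially the same route as the paper: the It\^o isometry plus the Plancherel-type identity reduce $\mathbb{E}|u(t,x)|^2$ to $\int_0^t\int_{\mathbb{R}^m}|\mathcal{F}_y\partial G/\partial\mathbf{n}_y(s,x,0,y)|^2\,\nu(\d y)\,\d s$, after which Tonelli and the bound \REQ{5E10} from Example \ref{5Ex1} give the stated estimate. The paper additionally records the explicit closed form of the inner integrand before invoking \REQ{5E10}, but this is only a restatement of the computation already done in Example \ref{5Ex1}, so there is no substantive difference.
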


Assume that $\xi$ is as in Example \ref{2Ex5}. Then
$$
u(t,x) = \int_0^t \int_{\partial \mathcal{O}}\frac{\partial G}{\partial \mathbf{n}_y}(t-s,x,y)\rho(y)\Pi(\d s,\d y).
$$
Thus
\begin{multline*}
\mathbb{E}\, u^2(t,x) \\
\le 2\left(  \int_0^t \int_{\partial \mathcal{O}}\left\vert \frac{\partial G}{\partial \mathbf{n}_y}(s,x,y)\right\vert ^2 |\rho(y)|^2 \d s\nu(\d y)+ \left( \int_0^t \int_{\partial \mathcal{O}}\left\vert \frac{\partial G}{\partial \mathbf{n}_y}(s,x,y)\right\vert |\rho(y)| \d s\nu(\d y)\right)^2\right)\\
\le C\left( \int_{\partial \mathcal O}\int_0^t s^{-d-1} \e^{-\frac{|x-y|^2}{2Ks}}\d s |\rho(y)|^2\nu(\d y)
+ \left( \int_{\partial \mathcal O}\int_0^t s^{-\frac{d+1}{2}} \e^{-\frac{|x-y|^2}{Ks}}\d s |\rho(y)|\nu(\d y)
\right)^2\right).
\end{multline*}
Taking into account \REQ{7E1} we obtain:
\begin{proposition}
Assume that $\xi$ is as in Example \ref{2Ex5}. Let $d>1$. Then
$$
\mathbb{E}\, u^2(t,x) \le C \left( \int_{\partial \mathcal{O}}|x-y|^{-2d} |\rho(y)|^2\nu(\d y) + \left( \int_{\partial \mathcal{O}}|x-y|^{-d-1} |\rho(y)|\nu(\d y)\right)^2\right).
$$
\end{proposition}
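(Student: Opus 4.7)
The plan is to start from the mild representation
$$u(t,x)=\int_0^t\!\int_{\partial\cO}\frac{\partial G}{\partial\mathbf{n}_y}(t-s,x,y)\,\rho(y)\,\Pi(\d s,\d y),$$
obtained by combining Theorem~\ref{6T4}(ii) with the explicit form of $\xi$ in Example~\ref{2Ex5}, and to apply the Poisson second-moment inequality recalled immediately before the statement. Inserting the Gaussian kernel bound of Theorem~\ref{5T3} with $|\alpha|=1$ and $n=0$, namely $\bigl|\frac{\partial G}{\partial\mathbf{n}_y}(s,x,y)\bigr|\le K_1 s^{-(d+1)/2}e^{-|x-y|^2/(K_2 s)}$, reduces the task to estimating the two deterministic $s$-integrals
$$J_1(x,y):=\int_0^t s^{-d-1}e^{-|x-y|^2/(2K_2 s)}\,\d s,\qquad J_2(x,y):=\int_0^t s^{-(d+1)/2}e^{-|x-y|^2/(K_2 s)}\,\d s,$$
which then get integrated in $y$ against $|\rho|^2\nu$ and $|\rho|\nu$, respectively.

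The bound $J_1(x,y)\le C|x-y|^{-2d}$ is exactly the already-established inequality \REQ{7E1}, and it produces the first summand in the conclusion directly. For $J_2$ I would perform the same change of variables that underlies \REQ{7E1}: setting $u=|x-y|^2/(K_2 s)$ converts $J_2$ into an incomplete Gamma integral of the form
$$J_2(x,y)=K_2^{(d-1)/2}\,|x-y|^{1-d}\int_{|x-y|^2/(K_2 t)}^{\infty}u^{(d-3)/2}e^{-u}\,\d u.$$
Under the hypothesis $d>1$ the exponent $(d-3)/2$ exceeds $-1$, so the remaining $u$-integral is dominated uniformly in the lower limit by $\Gamma\!\bigl((d-1)/2\bigr)$, giving the pointwise bound of order $|x-y|^{1-d}$ that feeds into the second summand.

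Substituting both estimates into the two-term Poisson moment inequality, moving the integrations in $y$ outside the time integrals, squaring the second piece as required, and absorbing all numerical constants into a single $C$ yields the claim. The only step that is not pure bookkeeping is the convergence of the $u$-integral at the origin, which is precisely where the assumption $d>1$ is used; I do not anticipate any further analytic obstacle, since everything else amounts to applying the Poisson $L^2$-inequality, the Gaussian heat kernel bound, and the elementary change of variables $u=|x-y|^2/(K_2 s)$.
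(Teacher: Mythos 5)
Your argument is the same as the paper's: the stochastic-integral representation of $u(t,x)$ from Theorem \ref{6T4}(ii) and Example \ref{2Ex5}, the two-term Poisson second-moment inequality quoted just before the proposition, the Gaussian bound of Theorem \ref{5T3} with $|\alpha|=1$, $n=0$, and the change of variables behind \REQ{7E1} applied to the two resulting time integrals. One point deserves attention: your (correct) computation of $J_2$ gives $J_2(x,y)\le C|x-y|^{1-d}$, i.e.\ exponent $2-(d+1)=1-d$, whereas the proposition as printed has $|x-y|^{-d-1}$ in the second summand. The analogue of \REQ{7E1} for the integrand $s^{-(d+1)/2}\e^{-|x-y|^2/(Ks)}$ indeed produces $1-d$, so the printed exponent appears to be a slip in the paper; your bound is the sharper one near the boundary (where $|x-y|\le 1$), which is the regime the section is concerned with. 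Strictly speaking $|x-y|^{1-d}$ does not dominate $|x-y|^{-d-1}$ when $|x-y|>1$, so if you insist on reproducing the stated inequality verbatim on an unbounded boundary you should keep the incomplete Gamma factor $\int_{|x-y|^2/(Kt)}^{\infty}u^{(d-3)/2}\e^{-u}\,\d u$ from your own formula rather than bounding it by $\Gamma\bigl((d-1)/2\bigr)$: for $|x-y|$ large this factor decays faster than any power, and in particular yields $J_2\le C_t|x-y|^{-d-1}$ there. With that one-line addendum your proof covers the statement as written; otherwise it proves the (corrected, and for the purposes of boundary blow-up, stronger) version.
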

\subsection{The case of $H\not =1/2$}
We restrict our attention to the case of $H>1/2$ and boundary noise  $\xi=\sum_k W_k^H\nu_k $ is as in Example \ref{2Ex3}.  The cases of $H<1/2$ or and $\xi=\sum_{k} W_k^He_k\nu$ require much longer calculations.
\begin{proposition}
Let $H>1/2$. Assume that $\xi=\sum_k W_k^H\nu_k $ is as in  Example \ref{2Ex3}. Then for any $1/2>\alpha>1-H$, $T>0$, and for any $x\in \cO$ there is a random variable $C$ having finite all moments such that
\begin{equation}\label{6E9bis}
\sup_{t\le T} |u(t,x)|\le C \left({\rm dist}\, (x, \partial \mathcal{O})\right)^{-d+1-\alpha}.
\end{equation}
\end{proposition}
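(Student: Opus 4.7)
The plan is to combine a Young/Riemann--Stieltjes representation of $u$, available because $H>1/2$, with a pathwise H\"older estimate on the driver $\xi$. Without loss of generality take $u_0=0$ and $\xi(0)=0$, so that by Theorem~\ref{6T4}
$$
u(t,x)=\int_0^t\!\left\langle\xi(s),\Delta_x\tfrac{\partial G}{\partial\mathbf n_y}(t-s,x,\cdot)\right\rangle ds.
$$
Since $\Delta_x G=\partial_r G$ and $\tfrac{\partial G}{\partial\mathbf n_y}(0,x,\cdot)=0$ for interior $x$, one has $\int_0^t\Delta_x\tfrac{\partial G}{\partial\mathbf n_y}(t-s,x,\cdot)\,ds=\tfrac{\partial G}{\partial\mathbf n_y}(t,x,\cdot)$. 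Writing $\xi(s)=\xi(t)-(\xi(t)-\xi(s))$ and substituting $r=t-s$ in the remaining term gives the key decomposition
$$
u(t,x)=\left\langle\xi(t),\tfrac{\partial G}{\partial\mathbf n_y}(t,x,\cdot)\right\rangle \;-\; \int_0^t\!\left\langle\xi(t)-\xi(t-r),\,\Delta_x\tfrac{\partial G}{\partial\mathbf n_y}(r,x,\cdot)\right\rangle dr,
$$
which transfers one time-derivative from the singular kernel onto a fBM increment of size $O(r^\beta)$ for any $\beta<H$.

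Next I would fix $\beta\in(1-\alpha,H)$, nonempty because $\alpha>1-H$, and prove a \emph{uniform pathwise} H\"older bound for $\xi$. For every bounded measurable $\phi$ on $\partial\mathcal O$, $\langle\xi(t)-\xi(s),\phi\rangle=\sum_k(W_k^H(t)-W_k^H(s))\int\phi\,d\nu_k$ is centred Gaussian with variance at most $|t-s|^{2H}\|\phi\|_\infty^2\sum_k\|\nu_k\|_{\mathrm{Var}}^2$. Applying Kolmogorov--Chentsov (or Garsia--Rodemich--Rumsey) to the $\mathcal S^\prime(\partial\mathcal O)$-valued Gaussian process $\xi$ as a whole then produces a random variable $M$, with all moments finite, such that almost surely
$$
|\langle\xi(t)-\xi(s),\phi\rangle|\le M\,\|\phi\|_\infty\,|t-s|^\beta,\qquad |\langle\xi(t),\phi\rangle|\le M\,\|\phi\|_\infty\,t^\beta,
$$
for every bounded $\phi$ on $\partial\mathcal O$ and all $s,t\in[0,T]$.

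Inserting these into the decomposition and invoking the heat-kernel estimates of Theorem~\ref{5T3}, namely
$$
\sup_y\bigl|\tfrac{\partial G}{\partial\mathbf n_y}(r,x,y)\bigr|\le K_1 r^{-(d+1)/2}\e^{-\rho^2/(K_2 r)},\quad \sup_y\bigl|\Delta_x\tfrac{\partial G}{\partial\mathbf n_y}(r,x,y)\bigr|\le K_1 r^{-(d+3)/2}\e^{-\rho^2/(K_2 r)},
$$
with $\rho=\mathrm{dist}(x,\partial\mathcal O)$, the entire pathwise estimate reduces to the elementary integral $\int_0^T r^\beta\,r^{-(d+3)/2}\e^{-\rho^2/(K_2 r)}\,dr$. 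The change of variable $v=\rho^2/(K_2 r)$ turns it into an incomplete Gamma integral bounded by a constant multiple of $\rho^{2\beta-d-1}$. Taking $\beta$ as close to $H$ as the constraint $\beta>1-\alpha$ permits and setting $C:=\mathrm{const}(T,\alpha,\sum_k\|\nu_k\|_{\mathrm{Var}}^2)\cdot M$ then yields $\sup_{t\le T}|u(t,x)|\le C\rho^{-d+1-\alpha}$.

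The main obstacle is not the kernel computation, which is elementary, but the uniform H\"older estimate for $\xi$: since only $\sum_k\|\nu_k\|_{\mathrm{Var}}^2<\infty$ is available, a naive termwise bound $\sum_k\|W_k^H\|_{C^\beta[0,T]}\|\nu_k\|_{\mathrm{Var}}$ diverges almost surely, so one is forced to apply Kolmogorov/GRR at the level of the $\mathcal S^\prime(\partial\mathcal O)$-valued process $\xi$ itself, exploiting the Gaussian variance bound that stems from the square-summability of $\|\nu_k\|_{\mathrm{Var}}$ together with the independence of the $W_k^H$.
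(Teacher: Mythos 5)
Your skeleton (reduce to $u_0=0$, $\xi(0)=0$, integrate by parts in time so that the singular kernel acts on increments $\xi(t)-\xi(t-r)$ of size $O(r^\beta)$, then use Theorem \ref{5T3}) is a reasonable alternative to what the paper actually does, which is to avoid any integration by parts and instead apply the pathwise Young-integral estimate of Nualart--Rascanu coordinatewise: $\bigl|\int_0^t f_k(s)\,\d W_k^H(s)\bigr|\le \Lambda_\alpha(W_k^H)\,I_\alpha(f_k)$ with $f_k(s)=\int_{\partial\mathcal O}\frac{\partial G}{\partial\mathbf n_y}(t-s,x,y)\,\nu_k(\d y)$, combined with Theorem \ref{5T3} and \REQ{7E1}; the random constant is then built from the fractional-derivative seminorms $\Lambda_\alpha(W_k^H)$, which have all moments finite. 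But your proof has a genuine gap at its central step. The claimed a.s.\ bound $|\langle\xi(t)-\xi(s),\phi\rangle|\le M\|\phi\|_\infty|t-s|^\beta$ \emph{for every bounded $\phi$} is exactly the statement that $t\mapsto\xi(t)$ is $\beta$-H\"older in total variation norm, and this is false under the standing hypothesis $\sum_k\|\nu_k\|_{\rm Var}^2<\infty$. Take mutually singular $\nu_k$ with $\|\nu_k\|_{\rm Var}=1/k$: then $\|\xi(t)-\xi(s)\|_{\rm Var}=\sum_k|W_k^H(t)-W_k^H(s)|\,k^{-1}=+\infty$ a.s.\ (independent nonnegative summands with divergent expectations), so no finite $M$ can exist. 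Kolmogorov--Chentsov/GRR cannot rescue this: those criteria require a moment bound on the \emph{norm} of the increment, i.e.\ on $\sup_{\|\phi\|_\infty\le1}|\langle\xi(t)-\xi(s),\phi\rangle|$, whereas the variance bound you compute is for each fixed $\phi$ and does not pass through the supremum. In other words, you correctly diagnosed that termwise summation of H\"older norms diverges, but the proposed repair quietly requires the very same divergent quantity to be finite. A workable repair must exploit the specific one-parameter family of test functions $r\mapsto\Delta_x\frac{\partial G}{\partial\mathbf n_y}(r,x,\cdot)$ (e.g.\ keep the index $k$ inside, use that $u(t,x)$ is Gaussian with variance controlled by $\sum_k\|\nu_k\|_{\rm Var}^2$ via the fBM covariance, and only then chain in $t$), or follow the paper's coordinatewise Nualart--Rascanu route.

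A secondary issue is the exponent bookkeeping. Your kernel integral yields $\int_0^T r^{\beta-(d+3)/2}\e^{-\rho^2/(K_2r)}\,\d r\lesssim\rho^{2\beta-d-1}$ with $\beta<H$, hence at best an exponent arbitrarily close to $2H-d-1$. The target exponent is $1-d-\alpha$, and $2\beta-d-1\ge1-d-\alpha$ forces $\beta\ge1-\alpha/2$, which is compatible with $\beta<H$ only when $\alpha>2(1-H)$ --- a strictly smaller range than the hypothesis $\alpha>1-H$. So even granting the H\"older estimate, your argument proves \REQ{6E9bis} only for $\alpha\ge2(1-H)$. The paper's $I_\alpha$-functional is designed precisely so that the loss in the time variable is $\alpha$ rather than $2\beta$, which is what produces the exponent $-d+1-\alpha$; if you keep the integration-by-parts route you need a sharper treatment of the increment term (splitting $|f(r)-f(q)|$ with the exponential factor retained inside the $r$-integral) to match it.
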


The proposition follows  from the estimates of Theorem \ref{5T3}, inequality \REQ{7E1},  and the  estimates established in \cite{Nualart-Rascanu}, Proposition 4.1, valid for a deterministic continuous  $f$ and fractional real-valued Brownian Motion $W^H$;
$$
\left\vert \int_0^t f(s)\d W^H(s)\right\vert \le
\Lambda_\alpha(W^H)I_\alpha(f),
$$
where $I_\alpha(f)$ is given by
$$
I_\alpha(t)(t) :=\int_0^t \left( \frac{|f(r)|}{r^\alpha} +\alpha \int_0^r \frac{|f(r)-f(q)|}{|r-q|^{\alpha +1}}\d q \right)\d r,
$$
$$
\Delta_\alpha (g): = \frac{1}{\Gamma (1-\alpha)} \sup_{0<s<t<T} \left\vert (D^{1-\alpha}_{t-}W^H_{t-})(s)\right\vert
$$
and
$$
D^\alpha _{a+} h(t) :=\frac{(-1)^\alpha}{\Gamma (1-\alpha)} \left(\frac{h(t)}{(a-t)^\alpha} + \alpha \int_t^a \frac{h(t)-h(r)}{(r-t)^{\alpha +1}}\d r \right) \chi_{(0,a)}(t).
$$


\end{document}